\numberwithin{equation}{section}
\newcommand{\half}{\frac{1}{2}}
\newcommand{\dud}[1]{\frac{\partial v}{\partial #1}}
\newcommand{\dudd}[1]{\frac{\partial^2v}{\partial #1^2}}
\newcommand{\duddm}[2]{\frac{\partial^2v}{\partial #1 \partial #2}}
\newcommand{\dd}{\mathrm{d}}
\newtheorem{theorem}{Theorem}
\newtheorem{remark}[theorem]{Remark}
\def\R{\mathbb{R}}
\def\C{\mathbb{C}}
\author{Karel~J.~in 't Hout\footnote{Department of Mathematics,
		University of Antwerp, Middelheimlaan 1, B-2020 Antwerp, Belgium.
		\mbox{Email}: \texttt{\{karel.inthout,pieter.lamotte\}@uantwerpen.be}.}
	~and Pieter Lamotte\footnotemark[\value{footnote}]
}
\title{Efficient numerical valuation of European options\\ 
under the two-asset Kou jump-diffusion model}
\date{March 20, 2023}
\begin{document}
	
	\maketitle
	
	\begin{abstract}
	This paper concerns the numerical solution of the two-dimensional time-dependent partial integro-differential equation 
	(PIDE) that holds for the values of European-style options under the two-asset Kou jump-diffusion model. 
	A main feature of this equation is the presence of a nonlocal double integral term.
	For its numerical evaluation, we extend a highly efficient algorithm derived by Toivanen~\cite{T08} in the case of the 
	one-dimensional Kou integral.
	The acquired algorithm for the two-dimensional Kou integral has optimal computational cost: the number of basic 
	arithmetic operations is directly proportional to the number of spatial grid points in the semidiscretization.
	For the effective discretization in time, we study seven contemporary operator splitting schemes of the 
	implicit-explicit (IMEX) and the alternating direction implicit (ADI) kind.
	All these schemes allow for a convenient, explicit treatment of the integral term. 
	We analyze their (von Neumann) stability.
	By ample numerical experiments for put-on-the-average option values, the actual convergence behavior 
	as well as the mutual performance of the seven operator splitting schemes are investigated. 
	Moreover, the Greeks Delta and Gamma are considered. 
	
	\medskip\noindent
    {\it Keywords:}~ partial integro-differential equations, operator splitting methods, implicit-explicit
    schemes, alternating direction implicit schemes, stability, convergence, Kou model, European options.
	\end{abstract}

%%%%%%%%%%%%%%%%%%
%  Introduction  %
%%%%%%%%%%%%%%%%%%
	
	\section{Introduction}
	
	In contemporary financial option valuation theory, jump-diffusion processes form a principal class of models for the 
	evolution of the underlying asset prices, see~e.g.~Cont \& Tankov~\cite{CT04book} and Schoutens~\cite{S03book}.
	The first jump-diffusion process was proposed in 1976 by Merton~\cite{M76}.
	In this classical model, the relative jump sizes are assumed to be lognormally distributed.
	A wide variety of jump-diffusion processes, and more generally, exponential L\'{e}vy processes, for asset prices has been introduced 
	in the literature since then, for example the familiar variance gamma (VG), normal inverse Gaussian (NIG) and Carr--Geman--Madan--Yor 
	(CGMY) models~\cite{CT04book,S03book}.
	
	In this paper we consider the jump-diffusion model proposed by Kou~\cite{K02}.
	In this model, the relative jump sizes are given by a log-double-exponential distribution.
	Just like the models mentioned above, Kou's jump-diffusion model has become popular in financial option valuation theory and practice.
	In this paper we are interested in the valuation of European-style options under a direct extension of Kou's single 
	asset jump-diffusion model~\cite{K02} to two assets.
	Here the finite activity jumps in the two asset prices are assumed to occur contemporaneously.
	Financial option valuation theory then yields a two-dimensional time-dependent partial integro-differential equation (PIDE)
	that must be satisfied for the values of European two-asset options. 
	The integral part, which stems from the contribution of the jumps, is nonlocal: it is taken over the whole, two-dimensional asset 
	price domain.
	In general, (semi-)closed analytical solutions to this PIDE are not available in the literature.
	Accordingly, in the present paper, we investigate the effective numerical solution of the two-dimensional time-dependent Kou PIDE.
	
	For the numerical solution, we follow the well-known and general method of lines (MOL) approach.
	The two-dimensional Kou PIDE is first discretized in space by finite differences, and the resulting large, semidiscrete system of 
	ordinary differential equations (ODEs) is subsequently discretized in time by a suitable implicit time stepping scheme.
	The main challenges for the efficient and stable numerical solution are the treatment of the two-dimensional integral part and the 
	treatment of the two-dimensional PDE part, which includes a mixed spatial derivative term.
	
	Spatial discretization of the PIDE leads to a large, dense matrix for the integral part.
	In each time step of the schemes under consideration, products of this matrix with one or more given vectors need to be computed,
	which can form a computational burden.
	For the one-dimensional Kou PIDE, however, Toivanen~\cite{T08} derived a simple algorithm for evaluating these matrix-vector 
	products that has optimal computational cost.
	A key result of our paper is a generalization of Toivanen's algorithm to the two-dimensional Kou PIDE that maintains optimal 
	computational cost.
	
	For the temporal discretization of semidiscretized one-dimensional PIDEs arising in financial option valuation under jump-diffusion 
	processes with finite activity jumps, various authors have proposed operator splitting schemes where the (stiff) PDE part is handled 
	implicitly and the (nonstiff) integral part explicitly.
	Cont \& Voltchkova~\cite{CV05} considered an {\it implicit-explicit (IMEX)} splitting scheme where the PDE part is handled by 
	the backward Euler method and the integral part by the forward Euler method.
	This IMEX Euler scheme is only first-order consistent.
	A variety of higher-order IMEX schemes for PIDEs in finance has been studied since, e.g.~by Briani, Natalini \& Russo~\cite{B07},
	Feng \& Linetsky~\cite{FL08}, Kwon \& Lee~\cite{KL11} and Salmi \& Toivanen~\cite{ST14}.
	The latter authors proposed the IMEX CNAB scheme, where the PDE part is treated by the Crank--Nicolson method and the integral part 
	by the second-order Adams--Bashforth method.
	The IMEX CNAB scheme has been successfully applied to two-dimensional option valuation PIDEs in e.g.~\cite{HT16,STS14}. 
	
	Tavella \& Randall~\cite{TR00book} and d'Halluin, Forsyth \& Vetzal~\cite{HFV05} considered an alternative approach where the PDE 
	part is treated by the Crank--Nicolson method and a fixed-point iteration on the integral part is performed in each time step. 
	This approach has been applied to two-dimensional option valuation PIDEs in Clift \& Forsyth~\cite{CF08}, including the 
	two-dimensional Kou PIDE.
	When the number of fixed-point iterations is frozen, one arrives at a particular IMEX scheme.
	
	For the efficient temporal discretization of semidiscrete two-dimensional PIDEs, a subsequent important improvement is obtained 
	by using, instead of the Crank--Nicolson method, an {\it alternating direction implicit (ADI)} splitting scheme for the 
	two-dimensional PDE part.
	In the computational finance literature, a range of effective, second-order ADI schemes has been developed and analyzed for 
	multi-dimensional PDEs (without integral part), where in each time step the implicit unidirectional stages are combined with 
	explicit stages involving the mixed derivative terms, see e.g.~\cite{H17book,HT16}.
	In this paper, we consider the well-established modified Craig--Sneyd (MCS) scheme, introduced by in 't Hout \& Welfert~\cite{HW09}, 
	and the stabilizing correction two-step Adams-type scheme called SC2A, constructed by Hundsdorfer \& in 't Hout~\cite{HH18}.
	
	The direct adaptation of ADI schemes for PDEs to PIDEs in finance has first been studied by Kaushansky, Lipton \& Reisinger~\cite{KLR18} 
	and next by in 't Hout \& Toivanen~\cite{HT18}.
	Here the implicit unidirectional stages are blended with explicit stages involving both the mixed derivative terms and the integral 
	part, leading again to second-order schemes.
	We note that an efficient, parallel implementation of the schemes developed in \cite{HT18} has been designed by Ghosh \& Mishra~\cite{GM21} 
	who apply a parallel cyclic reduction algorithm.
	
	Boen \& in 't Hout~\cite{BH21} recently investigated a collection of seven contemporary operator splitting schemes of both the 
	IMEX and the ADI kind in the application to the two-dimensional Merton PIDE for the values of two-asset options.
	Here the numerical evaluation of the integral term has been done by means of a FFT-type algorithm, following 
	e.g.~\cite{AO05,AA00,CF08,HFV05,STS14}. 
	Based on analytical and numerical evidence in \cite{BH21,HT18} in the case of the two-dimensional Merton and Bates PIDEs, 
	it is concluded that, among the schemes under consideration, the adaptation of the MCS scheme introduced in \cite{HT18} 
	that deals with the integral part in a two-step Adams--Bashforth fashion is preferable.
	
	In the present paper we consider for the two-dimensional Kou PIDE the same collection of operator splitting schemes as in~\cite{BH21}. 
	For the numerical evaluation of the double integral part, a generalization of the algorithm of Toivanen~\cite{T08} is derived that has 
	optimal computational cost.
	This algorithm is simple to implement, requires little memory usage and is computationally much faster than the FFT-type algorithm 
	mentioned above, cf.~\cite{T08}.
	As a representative example, we consider the approximation of European put-on-the-average option values, together with their Greeks.
	An outline of the rest of this paper is as follows.
	
	In Section~\ref{SecModel} the two-dimensional Kou PIDE is formulated.
	Section~\ref{SecSpatial} deals with its spatial discretization.
	First, in Subsection~\ref{SecPDEpart}, the two-dimensional PDE part is considered and a second-order finite difference discretization 
	on a suitable nonuniform spatial grid is described.  
	Next, in Subsection~\ref{SecIntpart}, we present the first main contribution of this paper.
	A common, second-order spatial discretization of the double integral part is employed and for its highly efficient evaluation 
	we derive an extension of the algorithm proposed by Toivanen~\cite{T08}.
	It is shown that the computational cost of the extension is directly proportional to the number of spatial grid points, 
	which is optimal.
	Section~\ref{SecTime} subsequently concerns the temporal discretization of the obtained semidiscrete two-dimensional Kou PIDE.
	Here the seven contemporary operator splitting schemes of the IMEX and ADI kind from~\cite{BH21} are considered.  
	Each of these schemes conveniently treats the integral part in an explicit manner, where its fast evaluation is performed by the 
	algorithm derived in Subsection~\ref{SecIntpart}.
	A stability analysis of the operator splitting schemes pertinent to two-dimensional PIDEs is given in Section~\ref{SecStab}.
	In Section~\ref{SecResults} ample numerical experiments are presented.
	Here European put-on-the-average option values, together with their Greeks Delta and Gamma, are considered and we examine in detail
	the temporal discretization errors and performance of the different operator splitting schemes.
	The final Section~\ref{SecConc} gives conclusions.

%%%%%%%%%%%%%%%%
%   Section 2  %
%%%%%%%%%%%%%%%%
	
	\section{The two-dimensional Kou PIDE}\label{SecModel}
	Under the two-asset Kou jump-diffusion model, the value $v = v(s_1,s_2,t)$ of a European-style option with maturity date $T>0$ and $s_i$ ($i=1,2$) 
	representing the price of asset $i$ at time $\tau = T-t$, satisfies the following PIDE:
	\begin{align}\label{PIDE2D}
		\dud{t} =&~ \tfrac{1}{2} \sigma_1^2s_1^2\dudd{s_1} + \rho \sigma_1\sigma_2s_1s_2\duddm{s_1}{s_2} + \tfrac{1}{2} \sigma_2^2 s_2^2\dudd{s_2} + 
		(r-\lambda \kappa_1) s_1\dud{s_1} + (r-\lambda\kappa_2) s_2 \dud{s_2} \nonumber \\
		& -(r+\lambda)v+\lambda\int_0^{\infty}\int_0^{\infty} f(y_1,y_2) v(s_1y_1, s_2y_2,t) \dd y_1 \dd y_2
	\end{align}
	whenever $s_1>0$, $s_2>0$, $0<t\leq T$.
	Here $r$ is the risk-free interest rate, $\sigma_i > 0$ ($i=1,2$) is the instantaneous volatility for asset $i$ conditional 
	on the event that no jumps occur, and $\rho$ is the correlation coefficient of the two underlying standard Brownian motions.
	Next, $\lambda$ is the jump intensity of the underlying Poisson arrival process, and $\kappa_i$ ($i=1,2$) is the expected 
	relative jump size for asset $i$.
    The function $f$ is the joint probability density function of two independent random variables possessing log-double-exponential 
	distributions~\cite{K02},
	\begin{equation}\label{pdf2D}
		f(y_1,y_2) = 
		\left\{\begin{array}{lll}
			q_1q_2\eta_{q_1}\eta_{q_2}y_1^{\eta_{q_1}-1}y_2^{\eta_{q_2}-1} & (0 < y_1, y_2 < 1),\\
			p_1q_2\eta_{p_1}\eta_{q_2}y_1^{-\eta_{p_1}-1}y_2^{\eta_{q_2}-1} & (y_1 \geq 1, 0< y_2 < 1),\\
			q_1p_2\eta_{q_1}\eta_{p_2}y_1^{\eta_{q_1}-1}y_2^{-\eta_{p_2}-1} & (0 < y_1 < 1, y_2 \geq 1),\\
			p_1p_2\eta_{p_1}\eta_{p_2}y_1^{-\eta_{p_1}-1}y_2^{-\eta_{p_2}-1} & (y_1, y_2 \geq 1).
		\end{array}\right.
	\end{equation}  
	The parameters $p_i$, $q_i$, $\eta_{p_i}$, $\eta_{q_i}$  are all positive constants with $p_i + q_i = 1$ and $\eta_{p_i} > 1$.
	It holds that
	\begin{equation*}
	    \kappa_i = \frac{p_i\eta_{p_i}}{\eta_{p_i}-1}+\frac{q_i\eta_{q_i}}{\eta_{q_i}+1}-1 \quad (i=1,2).
	\end{equation*}
	For \eqref{PIDE2D}, the initial condition is given by
	\[
	v(s_1,s_2,0) = \phi(s_1,s_2),
	\]
	where $\phi$ denotes the payoff function of the option.
	As a typical example, we consider in this paper a European put-on-the-average option, which has the payoff function
	\begin{equation}\label{payoff}
		\phi(s_1,s_2) = \textrm{max} \left(0\,,\,K-\frac{s_1+s_2}{2}\right)
	\end{equation}
	with strike price $K>0$.
	Its graph is shown in Figure~\ref{FigPayoff}.
	Concerning the boundary condition, it holds that the PIDE \eqref{PIDE2D} is itself satisfied on the two sides $s_{1} = 0$ 
	and $s_{2} = 0$, respectively.
	
	\begin{figure}[h]
	\centering
	\includegraphics[trim = 1.3in 3.7in 1in 3.7in, clip, scale=0.5]{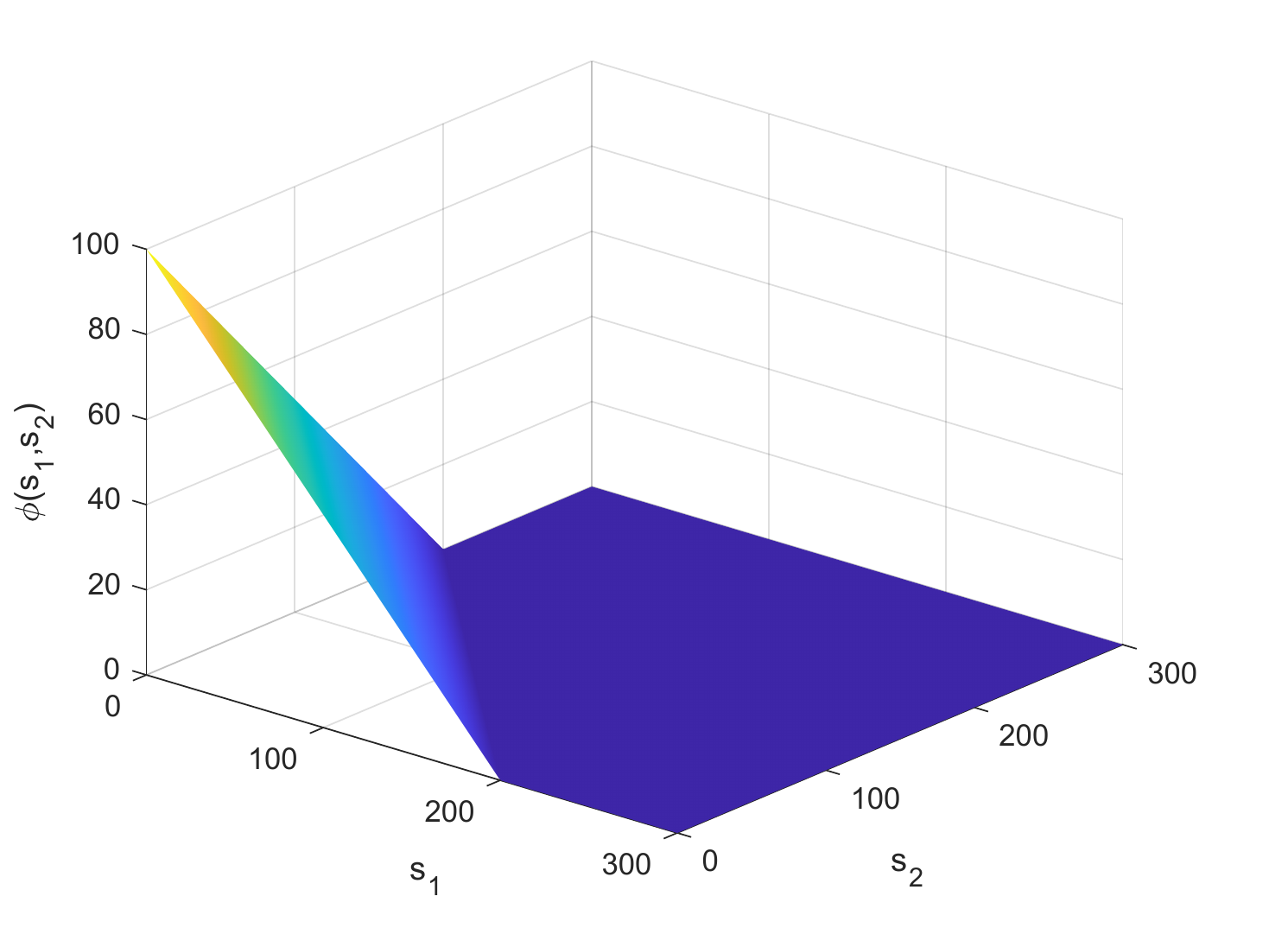}
	\caption{Payoff put-on-the-average option with $K=100$.}
	\label{FigPayoff}
	\end{figure}

%%%%%%%%%%%%%
% Section 3 %
%%%%%%%%%%%%%
	
	\section{Spatial discretization}\label{SecSpatial}
	For the numerical solution of the initial-boundary value problem for \eqref{PIDE2D} we employ the popular method of lines (MOL) approach.
	This approach consists of two consecutive steps~\cite{HV03book}: first the PIDE \eqref{PIDE2D} is discretized in space and subsequently in time.
	This Section~\ref{SecSpatial} deals with the spatial discretization. 
	In the next Section~\ref{SecTime} we shall consider the temporal discretization.
	
%%%%%%%%%%%%%%%
% Section 3.1 %
%%%%%%%%%%%%%%%
	
	\subsection{Convection-diffusion-reaction part}\label{SecPDEpart}
	For the numerical solution, the spatial domain is truncated to a bounded set $[0,S_{\rm max}]\times[0,S_{\rm max}]$ 
	with fixed value $S_{\rm max}$ chosen sufficiently large.
	On the two far sides $s_{1} = S_{\rm max}$ and $s_{2} = S_{\rm max}$ a linear boundary condition is taken, which is well-known in finance,
	\begin{equation}\label{LBC}
		\dudd{s_1} = 0~~(\textrm{if}~s_{1} = S_{\rm max}) \quad \mbox{ and } \quad  \dudd{s_2} = 0 ~~(\textrm{if}~s_{2} = S_{\rm max}).
	\end{equation}
	In this subsection we describe the finite difference discretization of the convection-diffusion-reaction part of the PIDE \eqref{PIDE2D}, 
	specified by
	\begin{equation*}\label{lindiffop}
		\mathcal{D} v =  \tfrac{1}{2} \sigma_1^2s_1^2\dudd{s_1} + \rho \sigma_1\sigma_2s_1s_2\duddm{s_1}{s_2} + \tfrac{1}{2} \sigma_2^2 s_2^2\dudd{s_2} 
		+ (r-\lambda \kappa_1) s_1\dud{s_1} + (r-\lambda\kappa_2) s_2 \dud{s_2} -(r+\lambda)v.
	\end{equation*}
	The semidiscretization of this part is common and similar to that in~e.g.~\cite{BH21}.
	
	Let integers $m_1, m_2 \geq 1$ be given.
	The option value function $v$ is approximated at a nonuniform, Cartesian set of spatial grid points,
	\begin{equation*}\label{sgrid}
		(s_{1,i},s_{2,j})\in [0,S_{\rm max}]\times[0,S_{\rm max}] \quad (0\leq i \leq m_1, \ 0\leq j \leq m_2),
	\end{equation*}
	with $s_{1,0} = s_{2,0} = 0$ and $s_{1,m_1} = s_{2,m_2} = S_{\rm max}$. 
	The nonuniform grid in each spatial direction is defined through a smooth transformation of an artificial uniform grid, such that 
	relatively many grid points are placed in a region of financial and numerical interest.
	Figure~\ref{FigGrid} shows a sample spatial grid if $m_1=m_2=50$, $K=100$ and $S_{\rm max} = 5K$.
	
	Let integer $m \geq 1$ and parameter $d > 0$.
	Consider equidistant points $0 = \xi_0 < \xi_1 < \dots < \xi_{m} = \xi_{\rm max}$ where
	\begin{equation*}
		\xi_{\rm max} = \xi_{\rm int} + \sinh^{-1}\left(\frac{S_{\rm max}}{d}-\xi_{\rm int}\right) 
		\quad \textrm{and } \quad
		\xi_{\rm int} = \frac{2K}{d}.
	\end{equation*}
	Then in each spatial direction a nonuniform mesh $0=s_0<s_1<\cdots<s_m=S_{\rm max}$ is constructed by the smooth transformation 
	$s_{i} = \varphi(\xi_i)$ $(0\le i \le m)$ with
	\begin{equation*}
		\varphi(\xi) = 
		\left\{\begin{array}{lll}
			d \xi &(0 \leq \xi \leq \xi_{\rm int}),\\\\
			2K + d\sinh(\xi-\xi_{\rm int}) &(\xi_{\rm int} < \xi \leq \xi_{\rm max}).
		\end{array}\right.
	\end{equation*}  
	This mesh for $s$ is uniform inside the interval $[0,2K]$ and nonuniform outside.
	The parameter $d$ controls the fraction of points $s_{i}$ that lie inside.
	In this paper we (heuristically) choose $d = K/10$, such that the largest fraction is inside $[0,2K]$.
	
	\begin{figure}[h]
		\centering
		\includegraphics[trim = 1.3in 3.7in 1in 3.7in, clip, scale=0.6]{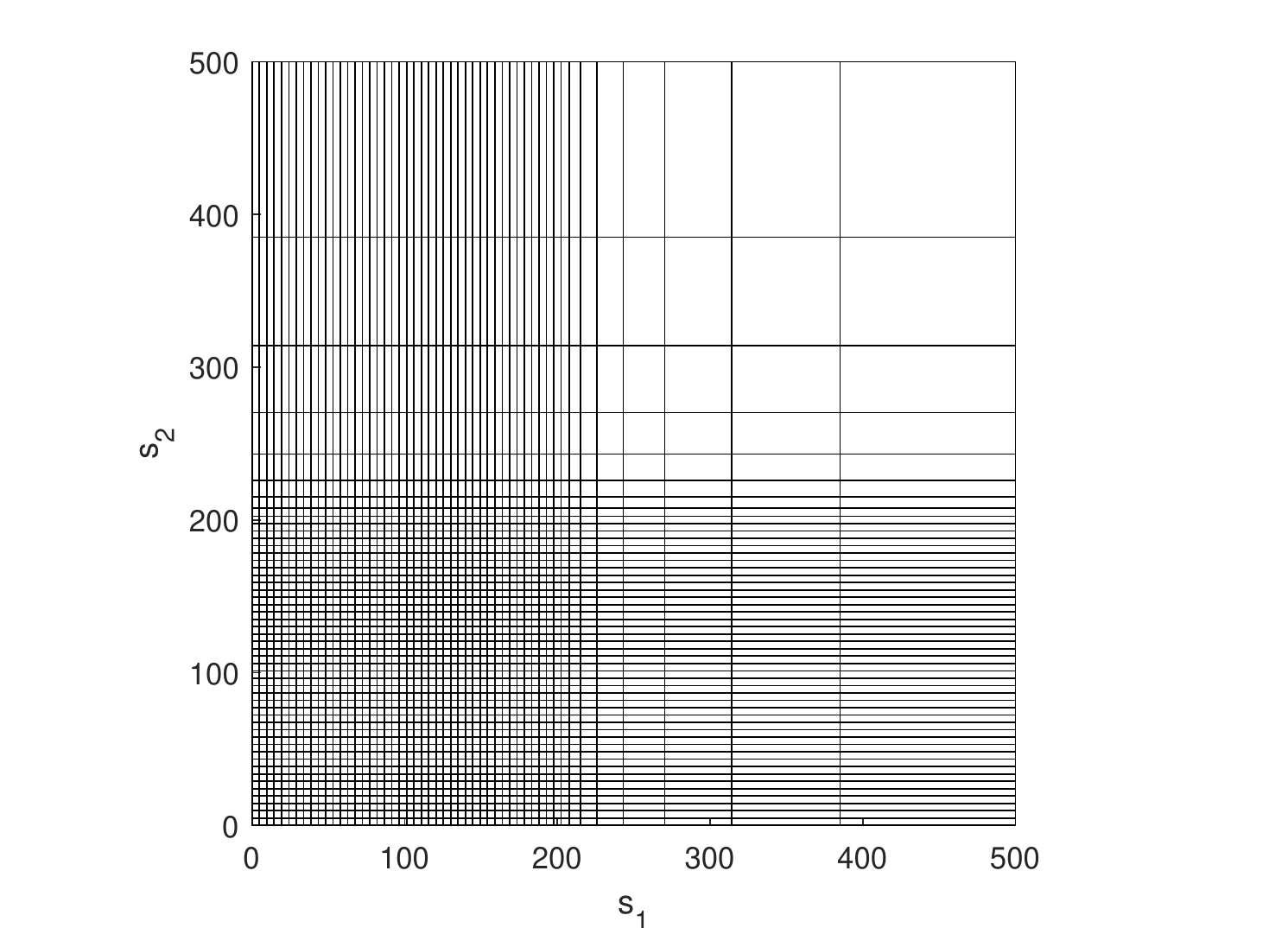}
		\caption{Sample spatial grid for $m_1=m_2=50$, $K=100$, $S_{\rm max} = 5K$.}
		\label{FigGrid}
	\end{figure}
	
	Define mesh widths $h_i = s_i-s_{i-1}$ ($1\leq i \leq m$) and let $u: [0,S_{\rm max}] \to \mathbb{R}$ be any given smooth function. 
	For approximating the first and second derivatives of $u$, the following second-order central finite difference formulas are used:
	\begin{align*}\label{semconv}
		u^\prime (s_i) \approx \omega_{i,-1} u(s_{i-1}) + \omega_{i,0} u(s_{i}) + \omega_{i,1} u(s_{i+1}),
	\end{align*}
	with 
	\[
	\omega_{i,-1} = \frac{-h_{i+1}}{h_{i}(h_{i}+h_{i+1})},\quad
	\omega_{i,0} = \frac{h_{i+1}-h_{i}}{h_{i}h_{i+1}},\quad 
	\omega_{i,1} = \frac{h_{i}}{h_{i+1}(h_{i}+h_{i+1})}
	\]
	and
	\begin{align*}\label{semdiff}
		u^{\prime\prime} (s_i) \approx \omega_{i,-1} u(s_{i-1}) + \omega_{i,0} u(s_{i}) + \omega_{i,1} u(s_{i+1}),
	\end{align*}
	with
	\[
	\omega_{i,-1} = \frac{2}{h_{i}(h_{i}+h_{i+1})},\quad 
	\omega_{i,0} = \frac{-2}{h_{i}h_{i+1}},\quad 
	\omega_{i,1} = \frac{2}{h_{i+1}(h_{i}+h_{i+1})}
	\]
	whenever $1\le i \le m-1$.
	For $i=0$ no finite difference formulas are needed, due to the degeneracy of $\mathcal{D} v$ at the zero boundaries.
	For $i=m$, the first derivative is discretized by the first-order backward finite difference formula and the second 
	derivative is equal to zero by the linear boundary condition \eqref{LBC}.
	Concerning the mixed derivative term in $\mathcal{D} v$, this is approximated by successively applying the relevant 
	finite difference formulas for the first derivative in the two spatial directions.
	
	Let $V_{i,j}(t)$ denote the semidiscrete approximation to $v(s_{1,i},s_{2,j},t)$ for $0\leq i \leq m_1$, $0\leq j \leq m_2$ 
	and define the vector 
	\[
	V(t) = (V_{0,0}(t), V_{1,0}(t),\ldots,V_{m_1-1,m_2}(t),V_{m_1,m_2}(t))^{\top} \in \mathbb{R}^{(m_1+1)(m_2+1)}.
	\] 
	The semidiscretized convection-diffusion-reaction part $\mathcal{D} v$ of PIDE \eqref{PIDE2D} is then given by
	\[
	A^{(D)} V(t)
	\] 
	with matrix
	\[
	A^{(D)} = A^{(M)} + A_1 + A_2,
	\]
	where
	\begin{align*}
		A^{(M)} &= \rho\sigma_1\sigma_2\left(X_2D_2^{(1)}\right)\otimes\left(X_1D_1^{(1)}\right),\\
		A_1 &= I_2 \otimes \left(\tfrac{1}{2}\sigma_1^2X_1^2D_1^{(2)} + (r-\lambda\kappa_1)X_1D_1^{(1)}-\tfrac{1}{2}(r+\lambda)I_1\right),\\
		A_2 &= \left(\tfrac{1}{2}\sigma_2^2X_2^2D_2^{(2)} + (r-\lambda\kappa_2)X_2D_2^{(1)}-\tfrac{1}{2}(r+\lambda)I_2\right)\otimes I_1.
	\end{align*}
	Here $I_k$, $X_k$, $D_k^{(l)}$ are given $(m_k+1)\times (m_k+1)$ matrices for $k,l\in \{1,2\}$ with $I_k$ being the identity matrix, 
	$X_k$ being the diagonal matrix
	\begin{equation*}
		X_k = {\rm diag}(s_{k,0}, s_{k,1},\ldots, s_{k,m_k})
	\end{equation*}
	and $D_k^{(l)}$ the matrix representing numerical differentiation of order $l$ in the $k$-th spatial direction by the relevant 
	finite difference formula above.
	The matrix $A^{(M)}$ corresponds to the mixed derivative term in $\mathcal{D} v$ and $A_k$ corresponds to all derivative terms 
	in the $k$-th spatial direction ($k=1,2$), where the reaction term has been distributed evenly across $A_1$ and $A_2$.
	These matrices are all sparse. In particular, $A_1$ and $A_2$ are (essentially) tridiagonal.

%%%%%%%%%%%%%%%
% Section 3.2 %
%%%%%%%%%%%%%%%
	
\subsection{Integral part}\label{SecIntpart}
In the following we consider discretization of the double integral
\begin{equation}\label{integral}
	\mathcal{J} = \lambda\int_0^{\infty}\int_0^{\infty} f(y_1,y_2)v(s_1y_1, s_2y_2,t) \dd y_1 \dd y_2
\end{equation}
on the spatial grid from Subsection~\ref{SecPDEpart}.
For its efficient evaluation, we derive an extension of the algorithm proposed by Toivanen~\cite{T08} for the special case of the 
one-dimensional Kou model.

Assume $s_1, s_2>0$ are given. A change of variables $y_i = z_i/s_i$ ($i=1,2$) yields
\begin{equation*}
	\mathcal{J} = \lambda\int_0^{\infty}\int_0^{\infty} f\bigg(\frac{z_1}{s_1},\frac{z_2}{s_2}\bigg) v(z_1,z_2,t) \frac{\dd z_1 \dd z_2}{s_1 s_2}.
\end{equation*}
The density function $f$ is defined on a partition of four sets of the first quadrant in the real plane, see (\ref{pdf2D}).
It follows that $\mathcal{J}$ is decomposed into four integrals as $\mathcal{J} = \mathcal{J}_1 + \mathcal{J}_2 + \mathcal{J}_3 + \mathcal{J}_4$, where
\begin{align*}
	\mathcal{J}_1 &= \lambda q_1q_2\eta_{q_1}\eta_{q_2}s_1^{-\eta_{q_1}}s_2^{-\eta_{q_2}} \int_0^{s_2} \int_0^{s_1} z_1^{\eta_{q_1}-1}z_2^{\eta_{q_2}-1} v(z_1,z_2,t) \dd z_1 \dd z_2,\\
	\mathcal{J}_2 &= \lambda p_1q_2\eta_{p_1}\eta_{q_2}s_1^{\eta_{p_1}}s_2^{-\eta_{q_2}}  \int_0^{s_2} \int_{s_1}^{\infty} z_1^{-\eta_{p_1}-1}z_2^{\eta_{q_2}-1} v(z_1,z_2,t) \dd z_1 \dd z_2,\\
	\mathcal{J}_3 &= \lambda q_1p_2\eta_{q_1}\eta_{p_2}s_1^{-\eta_{q_1}}s_2^{\eta_{p_2}}  \int_{s_2}^{\infty} \int_0^{s_1} z_1^{\eta_{q_1}-1}z_2^{-\eta_{p_2}-1} v(z_1,z_2,t) \dd z_1 \dd z_2,\\		
	\mathcal{J}_4 &= \lambda p_1p_2\eta_{p_1}\eta_{p_2}s_1^{\eta_{p_1}}s_2^{\eta_{p_2}}   \int_{s_2}^{\infty} \int_{s_1}^{\infty} z_1^{-\eta_{p_1}-1}z_2^{-\eta_{p_2}-1} v(z_1,z_2,t) \dd z_1 \dd z_2.
\end{align*}
We first consider discretization of the integral $\mathcal{J}_1$. 
Upon writing
\begin{equation*}
	\psi_1(s_1,s_2) = \lambda q_1q_2\eta_{q_1}\eta_{q_2}s_1^{-\eta_{q_1}}s_2^{-\eta_{q_2}} 
	\quad \textrm{and} \quad
	\varphi_1(z_1,z_2) = z_1^{\eta_{q_1}-1}z_2^{\eta_{q_2}-1},
\end{equation*}
we have
\begin{equation}
	\mathcal{J}_1 = \psi_1(s_1,s_2)\int_0^{s_2} \int_0^{s_1} \varphi_1(z_1,z_2) v(z_1,z_2,t) \dd z_1 \dd z_2.
\end{equation}
For $1\leq i \leq m_1$, $1\leq j \leq m_2$ let
\begin{equation*}
	\mathcal{J}_{1,ij} = \psi_1(s_{1,i},s_{2,j}) \int_0^{s_{2,j}} \int_0^{s_{1,i}} \varphi_1(z_1,z_2) v(z_1,z_2,t) \dd z_1 \dd z_2
\end{equation*}
denote the value of $\mathcal{J}_1$ at the spatial grid point $(s_{1,i},s_{2,j})$.
Define
\begin{equation*}
	\mathcal{G}_{1,kl} = \int_{s_{2,l-1}}^{s_{2,l}} \int_{s_{1,k-1}}^{s_{1,k}} \varphi_1(z_1,z_2) v(z_1,z_2,t) \dd z_1 \dd z_2
\end{equation*}
whenever $1\leq k \leq m_1$, $1\leq l \leq m_2$.
Then the following useful expression for $\mathcal{J}_{1,ij}$ in terms of a double cumulative sum is obtained,
\begin{equation}\label{cumsum1}
	\mathcal{J}_{1,ij} = \psi_1(s_{1,i},s_{2,j}) \sum_{k=1}^i \sum_{l=1}^j \mathcal{G}_{1,kl}
	\quad
	(1\leq i \leq m_1, 1\leq j \leq m_2).
\end{equation}
Notice the obvious but important fact that the $\mathcal{G}_{1,kl}$ are independent of the indices $i$ and $j$.
Hence, if all values $\mathcal{G}_{1,kl}$ are given, then computing the double cumulative sums in \eqref{cumsum1} 
for all $i$, $j$ can be done in, to leading order, just $2m_1m_2$ additions.

We subsequently construct approximations $G_{1,kl}$ to $\mathcal{G}_{1,kl}$ ($1\leq k \leq m_1$, $1\leq l \leq m_2$)
and define the approximation to $\mathcal{J}_{1,ij}$ by
\begin{equation}\label{cumsum1_semi}
	J_{1,ij} = \psi_1(s_{1,i},s_{2,j}) \sum_{k=1}^i \sum_{l=1}^j G_{1,kl}
	\quad
	(1\leq i \leq m_1, 1\leq j \leq m_2).
\end{equation}
For any given $k, l$ with $1\leq k \leq m_1$, $1\leq l \leq m_2$ consider the natural choice of bilinear interpolation 
to approximate $v(z_1,z_2,t)$ on the $(z_1,z_2)$-domain $[s_{1,k-1}, s_{1,k}] \times [s_{2,l-1}, s_{2,l}]$:
\begin{equation*}
	{\widetilde v}_{kl}(z_1, z_2,t) = \ell_{kl}^{00}(z_1,z_2) V_{k-1,l-1}(t) + \ell_{kl}^{10}(z_1,z_2) V_{k,l-1}(t) +
	\ell_{kl}^{01}(z_1,z_2) V_{k-1,l}(t) + \ell_{kl}^{11}(z_1,z_2) V_{k,l}(t)
\end{equation*}
with weights
\begin{align*}
	\ell_{kl}^{00}(z_1,z_2) &= (s_{1,k}-z_1)(s_{2,l}-z_2)/\delta_{kl},\\
	\ell_{kl}^{10}(z_1,z_2) &= (z_1-s_{1,k-1})(s_{2,l}-z_2)/\delta_{kl},\\
	\ell_{kl}^{01}(z_1,z_2) &= (s_{1,k}-z_1)(z_{2}-s_{2,l-1})/\delta_{kl},\\
	\ell_{kl}^{11}(z_1,z_2) &= (z_1-s_{1,k-1})(z_2-s_{2,l-1})/\delta_{kl},
\end{align*}
where $\delta_{kl} = \Delta s_{1,k}\Delta s_{2,l}$ and $\Delta s_{1,k} = s_{1,k}-s_{1,k-1}$, $\Delta s_{2,l} = s_{2,l}-s_{2,l-1}$.
Then we define 
\begin{equation*}
	G_{1,kl} = \int_{s_{2,l-1}}^{s_{2,l}} \int_{s_{1,k-1}}^{s_{1,k}} \varphi_1(z_1,z_2) {\widetilde v}_{kl}(z_1, z_2,t) \dd z_1 \dd z_2.
\end{equation*}
A straightforward calculation yields the simple, convenient formula 
\begin{equation}\label{G1kl}
	G_{1,kl} = 
	\gamma_{1,kl}^{00} V_{k-1,l-1}(t) + \gamma_{1,kl}^{10} V_{k,l-1}(t) +
	\gamma_{1,kl}^{01} V_{k-1,l}(t)   + \gamma_{1,kl}^{11} V_{k,l}(t)
\end{equation}
with
\begin{align*}
	\gamma_{1,kl}^{00} &= (s_{1,k}s_{2,l}\zeta_{1,kl}^{00} - s_{2,l}\zeta_{1,kl}^{10} - s_{1,k}\zeta_{1,kl}^{01} + \zeta_{1,kl}^{11})/\delta_{kl},\\
	\gamma_{1,kl}^{10} &= (-s_{1,k-1}s_{2,l}\zeta_{1,kl}^{00} + s_{2,l}\zeta_{1,kl}^{10} + s_{1,k-1}\zeta_{1,kl}^{01} - \zeta_{1,kl}^{11} )/\delta_{kl},\\
	\gamma_{1,kl}^{01} &= (-s_{1,k}s_{2,l-1}\zeta_{1,kl}^{00} + s_{2,l-1}\zeta_{1,kl}^{10} + s_{1,k}\zeta_{1,kl}^{01} - \zeta_{1,kl}^{11} )/\delta_{kl},\\
	\gamma_{1,kl}^{11} &= (s_{1,k-1}s_{2,l-1}\zeta_{1,kl}^{00} - s_{2,l-1}\zeta_{1,kl}^{10} - s_{1,k-1}\zeta_{1,kl}^{01} + \zeta_{1,kl}^{11})/\delta_{kl}
\end{align*}
and
\begin{equation*}
	\zeta_{1,kl}^{ab} =  
	\int_{s_{2,l-1}}^{s_{2,l}} \int_{s_{1,k-1}}^{s_{1,k}} \varphi_1(z_1,z_2) z_1^a z_2^b \dd z_1 \dd z_2 =
	\frac{\bigg(s_{1,k}^{a+\eta_{q_1}}-s_{1,k-1}^{a+\eta_{q_1}}\bigg)\bigg(s_{2,l}^{b+\eta_{q_2}}-s_{2,l-1}^{b+\eta_{q_2}}\bigg)}{\big(a+\eta_{q_1}\big)\big(b+\eta_{q_2}\big)}
\end{equation*}
for $a,b\in \{0,1\}$.

The coefficients $\gamma_{1,kl}^{ab}$ are completely determined by the Kou parameters and the spatial grid. 
Since they are independent of $t$, they can be computed upfront, before the time discretization.
Clearly, for any given vector $V(t)$, the computation of $G_{1,kl}$ by \eqref{G1kl} for all $k$, $l$ requires 
$3m_1m_2$ additions and $4m_1m_2$ multiplications.
Noticing that the values of $\psi_1$ in \eqref{cumsum1_semi} can also be computed upfront, it follows that 
the number of basic arithmetic operations to compute all approximations $J_{1,ij}$ ($1\leq i \leq m_1$, 
$1\leq j \leq m_2$) by \eqref{cumsum1_semi} is, to leading order, equal to $10m_1m_2$.

The discretization and efficient evaluation of the other three integrals is done completely analogously.
In the relevant derivation, $v$ is approximated by zero outside the spatial domain $[0,S_{\rm max}]\times [0,S_{\rm max}]$.
Write
\begin{align*}
	\psi_2(s_1,s_2) &= \lambda p_1q_2\eta_{p_1}\eta_{q_2}s_1^{\eta_{p_1}}s_2^{-\eta_{q_2}},\\
	\psi_3(s_1,s_2) &= \lambda q_1p_2\eta_{q_1}\eta_{p_2}s_1^{-\eta_{q_1}}s_2^{\eta_{p_2}},\\ 
	\psi_4(s_1,s_2) &= \lambda p_1p_2\eta_{p_1}\eta_{p_2}s_1^{\eta_{p_1}}s_2^{\eta_{p_2}}.
\end{align*}
Let $1\leq i \leq m_1$, $1\leq j \leq m_2$. 
Then the approximations of $\mathcal{J}_2, \mathcal{J}_3, \mathcal{J}_4$ 
at the spatial grid point $(s_{1,i},s_{2,j})$ are given by, respectively, the double cumulative sums
\begin{align*}
	J_{2,ij} &= \psi_2(s_{1,i},s_{2,j}) \sum_{k=i+1}^{m_1} \sum_{l=1}^j G_{2,kl},\\
	J_{3,ij} &= \psi_3(s_{1,i},s_{2,j}) \sum_{k=1}^{i}     \sum_{l=j+1}^{m_2} G_{3,kl},\\
	J_{4,ij} &= \psi_4(s_{1,i},s_{2,j}) \sum_{k=i+1}^{m_1} \sum_{l=j+1}^{m_2} G_{4,kl}
\end{align*}
with the usual convention that empty sums are equal to zero.
For $2\le \nu \le 4$, $1\leq k \leq m_1$, $1\leq l \leq m_2$ we obtain
\begin{equation*}\label{Gnukl}
	G_{\nu,kl} = 
	\gamma_{\nu,kl}^{00} V_{k-1,l-1}(t) + \gamma_{\nu,kl}^{10} V_{k,l-1}(t) +
	\gamma_{\nu,kl}^{01} V_{k-1,l}(t)   + \gamma_{\nu,kl}^{11} V_{k,l}(t)
\end{equation*}
with
\begin{align*}
	\gamma_{\nu,kl}^{00} &= (s_{1,k}s_{2,l}\zeta_{\nu,kl}^{00} - s_{2,l}\zeta_{\nu,kl}^{10} - s_{1,k}\zeta_{\nu,kl}^{01} + \zeta_{\nu,kl}^{11})/\delta_{kl},\\
	\gamma_{\nu,kl}^{10} &= (-s_{1,k-1}s_{2,l}\zeta_{\nu,kl}^{00} + s_{2,l}\zeta_{\nu,kl}^{10} + s_{1,k-1}\zeta_{\nu,kl}^{01} - \zeta_{\nu,kl}^{11} )/\delta_{kl},\\
	\gamma_{\nu,kl}^{01} &= (-s_{1,k}s_{2,l-1}\zeta_{\nu,kl}^{00} + s_{2,l-1}\zeta_{\nu,kl}^{10} + s_{1,k}\zeta_{\nu,kl}^{01} - \zeta_{\nu,kl}^{11} )/\delta_{kl},\\
	\gamma_{\nu,kl}^{11} &= (s_{1,k-1}s_{2,l-1}\zeta_{\nu,kl}^{00} - s_{2,l-1}\zeta_{\nu,kl}^{10} - s_{1,k-1}\zeta_{\nu,kl}^{01} + \zeta_{\nu,kl}^{11})/\delta_{kl}
\end{align*}
where
\begin{align*}
	\zeta_{2,kl}^{ab} &=  
	\frac{\bigg(s_{1,k}^{a-\eta_{p_1}}-s_{1,k-1}^{a-\eta_{p_1}}\bigg)\bigg(s_{2,l}^{b+\eta_{q_2}}-s_{2,l-1}^{b+\eta_{q_2}}\bigg)}{\big(a-\eta_{p_1}\big)\big(b+\eta_{q_2}\big)},\\
	\zeta_{3,kl}^{ab} &=  
	\frac{\bigg(s_{1,k}^{a+\eta_{q_1}}-s_{1,k-1}^{a+\eta_{q_1}}\bigg)\bigg(s_{2,l}^{b-\eta_{p_2}}-s_{2,l-1}^{b-\eta_{p_2}}\bigg)}{\big(a+\eta_{q_1}\big)\big(b-\eta_{p_2}\big)},\\
	\zeta_{4,kl}^{ab} &=  
	\frac{\bigg(s_{1,k}^{a-\eta_{p_1}}-s_{1,k-1}^{a-\eta_{p_1}}\bigg)\bigg(s_{2,l}^{b-\eta_{p_2}}-s_{2,l-1}^{b-\eta_{p_2}}\bigg)}{\big(a-\eta_{p_1}\big)\big(b-\eta_{p_2}\big)}
\end{align*}
for $a,b\in \{0,1\}$.

On the boundary part $\{(s_1,0): s_1>0\}$ the double integral \eqref{integral} reduces to the single integral
\begin{equation}\label{integral1_1D}
	\mathcal{J} = \lambda\int_0^{\infty} f_1(y_1)v(s_1y_1,0,t) \dd y_1.
\end{equation}
The discretization and efficient evaluation of this integral, which arises in the one-dimensional 
Kou PIDE, has been constructed by Toivanen~\cite{T08}.
For completeness, we include the result here.
Assume $s_1>0$. 
Then for \eqref{integral1_1D} there holds $\mathcal{J} = \mathcal{J}_1 + \mathcal{J}_2$, where
\begin{equation*}
	\mathcal{J}_1 = \lambda \int_0^{s_1} f_1\bigg(\frac{z_1}{s_1}\bigg) v(z_1,0,t) \frac{\dd z_1 }{s_1}
	\quad \textrm{and} \quad
	\mathcal{J}_2 = \lambda \int_{s_1}^{\infty} f_1\bigg(\frac{z_1}{s_1}\bigg) v(z_1,0,t) \frac{\dd z_1 }{s_1}.
\end{equation*}
Write
\begin{equation*}
	\psi_1(s_1) = \lambda q_1\eta_{q_1}s_1^{-\eta_{q_1}} 
	\quad \textrm{and} \quad
	\psi_2(s_1) = \lambda p_1\eta_{p_1}s_1^{\eta_{p_1}}.
\end{equation*}
Let $1\leq i \leq m_1$. 
Then the approximations of $\mathcal{J}_1, \mathcal{J}_2$ at the spatial grid point $(s_{1,i},0)$ are given by, 
respectively, the single cumulative sums
\begin{equation*}
	J_{1,i} = \psi_1(s_{1,i}) \sum_{k=1}^{i} G_{1,k}
	\quad \textrm{and} \quad
	J_{2,i} = \psi_2(s_{1,i}) \sum_{k=i+1}^{m_1} G_{2,k}.
\end{equation*}
For $1\le \nu \le 2$, $1\leq k \leq m_1$ we obtain, by employing linear interpolation,
\begin{equation*}\label{Gnuk}
	G_{\nu,k} = 
	\gamma_{\nu,k}^{0} V_{k-1,0}(t) + \gamma_{\nu,k}^{1} V_{k,0}(t)
\end{equation*}
with
\begin{equation*}
	\gamma_{\nu,k}^{0} = (s_{1,k}\zeta_{\nu,k}^{0} - \zeta_{\nu,k}^{1})/\Delta s_{1,k}
	\quad \textrm{and} \quad
	\gamma_{\nu,k}^{1} = (-s_{1,k-1}\zeta_{\nu,k}^{0} + \zeta_{\nu,k}^{1} )/\Delta s_{1,k}
\end{equation*}
where
\begin{equation*}
	\zeta_{1,k}^{a} =  
	\frac{s_{1,k}^{a+\eta_{q_1}}-s_{1,k-1}^{a+\eta_{q_1}}}{a+\eta_{q_1}}
	\quad \textrm{and} \quad
	\zeta_{2,k}^{a} =  
	\frac{s_{1,k}^{a-\eta_{p_1}}-s_{1,k-1}^{a-\eta_{p_1}}}{a-\eta_{p_1}}
	\quad (a=0,1).
\end{equation*}

On the boundary part $\{(0,s_2): s_2>0\}$ the double integral \eqref{integral} reduces 
to the single integral
\begin{equation}\label{integral2_1D}
	\mathcal{J} = \lambda\int_0^{\infty} f_2(y_2)v(0,s_2y_2,t) \dd y_2
\end{equation}
and the discretization is performed completely similarly as above. 
Finally, at the spatial grid point $(s_1,s_2)=(0,0)$ it holds that 
$\mathcal{J} \approx  \lambda V_{0,0}(t)$.

From the above it follows that the number of basic arithmetic operations to compute, for any given $t$, the approximation to the double 
integral \eqref{integral} on the full spatial grid is, to leading order, equal to $40m_1m_2$.
 
As an illustration, Table~\ref{cpu_integral} shows the obtained CPU times (in seconds) for our implementation\footnote{In Matlab version 
R2020b, on an Intel Core i7-8665U processor at 1.9 GHz with 16 GB memory.} of the acquired algorithm for approximating 
\eqref{integral} for a single time $t$ on the full spatial grid with $m_1=m_2=m$ and $m=100, 200, \ldots, 1000$ in the case of parameter 
set~1 from Section~\ref{SecResults}.
It is readily verified from Table~\ref{cpu_integral} that the CPU time behaves, for large $m$, directly proportional to $m^2$, which 
agrees with the above theoretical result. 

\begin{table}[H]
    \centering
	\caption{CPU time vs.~$m_1=m_2=m$ for the algorithm of Subsection~\ref{SecIntpart} to approximate the integral \eqref{integral}.}
	\vspace{0.3cm}
		\begin{tabular}{@{}lcccccccccc@{}}
		\toprule
		$m$ & 100 & 200 & 300 & 400 & 500 & 600 & 700 & 800 & 900 & 1000 \\ 
		\midrule
		CPU time (s) & 1.2e-3 & 2.9e-3 & 8.8e-3 & 2.3e-2 & 3.7e-2 & 5.2e-2 & 7.3e-2 & 9.7e-2 & 1.3e-1 & 1.6e-1 \\ 
		\bottomrule
	\end{tabular}
	\label{cpu_integral}
\end{table}

%%%%%%%%%%%%%%%
% Section 3.2 %
%%%%%%%%%%%%%%%
	
\subsection{Semidiscrete PIDE}\label{SecSemiPIDE}
Combining the above semidiscretizations for the convection-diffusion-reaction and integral parts of PIDE \eqref{PIDE2D}, the following 
large system of ODEs is obtained:
\begin{align}\label{ODEs}
	V'(t) = AV(t) \quad (0<t\leq T),
\end{align}
where 
\begin{equation*}
	A = A^{(D)} + A^{(J)} = A^{(M)} + A_1 + A_2 + A^{(J)}.
\end{equation*}
Even though we never actually compute it in this way, the approximation to the double integral \eqref{integral} 
defined in Subsection~\ref{SecIntpart} is formally denoted here by $A^{(J)} V(t)$ with given (full) matrix $A^{(J)}$.

The initial vector $V(0)=V^0$ is defined by pointwise evaluation of the payoff function $\phi$ on the spatial grid, 
except at those grid points that lie near to the line segment given by $s_1+s_2 = 2K$ where $\phi$ is nonsmooth.
It is well-known that using pointwise values of $\phi$ in such a region can lead to a deteriorated (spatial) 
convergence behavior, which can be alleviated by applying cell averaging. 
Accordingly, for $k=1,2$ let
\begin{eqnarray*}
	&s_{k,l+1/2} = \tfrac{1}{2}(s_{k,l}+s_{k,l+1})~~~~ &{\rm if}~~ 0\le l < m,\\
	&h_{k,l+1/2} = s_{k,l+1/2}-s_{k,l-1/2} &{\rm if}~~ 0\le l \le m,
\end{eqnarray*}
with $s_{k,-1/2} = -s_{k,1/2}$ and $s_{k,m+1/2} = S_{\rm max}$.
Then we define \cite{H17book}
\begin{equation}\label{cellave}
	V_{i,j}(0) = \frac{1}{h_{1,i+1/2}h_{2,j+1/2}}\int_{s_{1,i-1/2}}^{s_{1,i+1/2}}\int_{s_{2,j-1/2}}^{s_{2,j+1/2}} \phi(s_1,s_2) \dd s_2 \dd s_1
\end{equation}
whenever the cell 
\[
[s_{1,i-1/2}, s_{1,i+1/2})\times[s_{2,j-1/2}, s_{2,j+1/2})
\]
has a nonempty intersection with the line segment $s_1+s_2 = 2K$. 
Note that the double integral \eqref{cellave} is easily calculated.

%%%%%%%%%%%%%
% Section 4 %
%%%%%%%%%%%%%

\section{Temporal discretization}\label{SecTime}
For the temporal discretization of the semidiscrete PIDE \eqref{ODEs} we study seven operator splitting schemes of the IMEX and ADI 
kind~\cite{BH21}.
Let integer $N\ge 1$ be given and define step size $\Delta t = T/N$ and temporal grid points $t_n = n \Delta t$ for $n=0,1,2,\ldots,N$.
Each of the following schemes generates an approximation $V^{n}$ to $V(t_n)$ successively for $n=1,2,\ldots,N$.
Here the integral part of \eqref{ODEs}, corresponding to the matrix $A^{(J)}$, is always conveniently treated in an explicit manner
and evaluated by applying the efficient algorithm derived in Subsection~\ref{SecIntpart}.

\begin{enumerate}
	
	\item \textit{Crank--Nicolson Forward Euler (CNFE) scheme}:
	\vskip0.1cm
	In this one-step IMEX method, the convection-diffusion-reaction part is handled implicitly by the Crank--Nicolson scheme and the 
	integral part is treated in a simple, explicit Euler fashion:
	\begin{align}\label{CNFE}
		\left(I-\tfrac{1}{2}\Delta t A^{(D)}\right)V^{n} = \left(I + \tfrac{1}{2}\Delta t A^{(D)}\right)V^{n-1} + \Delta t A^{(J)} V^{n-1}.
	\end{align}
	Due to the application of explicit Euler, the order\footnote{This means the classical order of consistency, i.e., for fixed 
		nonstiff ODEs.} of the CNFE scheme is just equal to one. 
	\vskip0.2cm
	\newpage
	
	\item \textit{Crank--Nicolson scheme with fixed-point iteration (CNFI)}:
	\vskip0.1cm
	Tavella \& Randall \cite{TR00book} and d'Halluin, Forsyth \& Vetzal \cite{HFV05} proposed to combine the Crank--Nicolson scheme for 
	the convection-diffusion-reaction part with a fixed-point iteration on the integral part:
	\begin{align}\label{CNFI}
		\left(I-\tfrac{1}{2}\Delta t A^{(D)}\right)Y_k = \left(I + \tfrac{1}{2}\Delta t A^{(D)}\right)V^{n-1} +  \tfrac{1}{2} \Delta t A^{(J)} (Y_{k-1}+V^{n-1})
	\end{align}
	for $k=1,2,\ldots,l$ and $V^n = Y_l$ with starting vector $Y_0 = V^{n-1}$.
	Numerical evidence in \cite{CF08,HFV05} indicates that in general $l=2$ or $l=3$ iterations suffice.
	In the numerical experiments in Section~\ref{SecResults} we shall consider a fixed number of $l=2$ iterations.
	Then method \eqref{CNFI} can be viewed as a one-step IMEX scheme, where the integral part is treated by the explicit trapezoidal 
	rule, also called the modified Euler method. 
	By Taylor expansion, it can be seen that its order is equal to two.
	
	\item \textit{Implicit-Explicit Trapezoidal Rule (IETR)}:
	\vskip0.1cm
	Another blend of the implicit trapezoidal rule (Crank--Nicolson) for the convection-diffusion-reaction part and the explicit 
	trapezoidal rule for the integral part has been considered in~\cite{H17book}:
	\begin{equation}\label{IETR}
		\left\{\begin{array}{lll}
			Y_0 = V^{n-1} + \Delta t\, (A^{(D)}+A^{(J)})V^{n-1},\\
			\widehat{Y}_0 = Y_0+\tfrac{1}{2} \Delta t A^{(J)} \big(Y_0-V^{n-1}\big),\\
			Y_1 = \widehat{Y}_0+\tfrac{1}{2} \Delta t A^{(D)} \big(Y_1-V^{n-1}\big),\\
			V^n = Y_1.
		\end{array}\right.
	\end{equation} 
	Method~\eqref{IETR} also forms a one-step IMEX scheme with order equal to two.  
	It is already well-known in the literature on the numerical solution of PDEs (without integral part), see e.g.~Hundsdorfer \& 
	Verwer~\cite{HV03book}.
	Notice that the first two internal stages $Y_0$, $\widehat{Y}_0$ are explicit, whereas the third internal stage $Y_1$ is implicit.
	\vskip0.2cm 
	
	\item \textit{Crank--Nicolson Adams--Bashforth (CNAB) scheme}:
	\vskip0.1cm
	The CNAB method has been amply considered in the literature on the numerical solution of PDEs as well, 
	see e.g.~\cite{FHV97,HV03book,H02}.
	It has been investigated for the numerical solution of PIDEs by Salmi \& Toivanen \cite{ST14} and Salmi, Toivanen \& von Sydow 
	\cite{STS14}. 
	The convection-diffusion-reaction part is again handled by the Crank--Nicolson scheme, but the integral part is now dealt with 
	in a two-step Adams--Bashforth fashion:
	\begin{align}\label{CNAB}
		\left(I-\tfrac{1}{2} \Delta t A^{(D)}\right)V^{n} = \left(I+\tfrac{1}{2} \Delta t A^{(D)}\right)V^{n-1} + \tfrac{1}{2}\Delta t A^{(J)}(3V^{n-1}-V^{n-2}).
	\end{align}
	Method \eqref{CNAB} constitutes a two-step IMEX scheme and is of order equal to two. 
	It is interesting to remark that \eqref{CNAB} can also be viewed as obtained from \eqref{CNFI}, upon taking $l=1$ and 
	improved (linearly extrapolated) starting value $Y_0 = 2V^{n-1}-V^{n-2}$.
	\vskip0.2cm 
	
\end{enumerate}

As for the spatial discretization, it is well-known that the nonsmoothness of the payoff function can have an unfavorable 
impact on the convergence behavior of the temporal discretization, even if cell averaging is applied, cf.~\cite{H17book}.
For the Crank--Nicolson scheme a common approach to alleviate this has been proposed by Rannacher~\cite{R84}. 
It consists of replacing (only) the first time step by two time steps with step size $\Delta t/2$ using the implicit Euler method, 
thus defining the approximation $V^1$ to $V(t_1)$.
In the present case of two-dimensional PIDEs, the implicit Euler method can be computationally intensive.
We therefore consider, for all four IMEX schemes above, the {\it IMEX Euler scheme} as the starting method:
\begin{align*}    
	\left(I-\tfrac{1}{2}\Delta t A^{(D)}\right)V^{\tfrac{1}{2}} &= V^{0} + \tfrac{1}{2}\Delta t A^{(J)} V^{0},\\
	\left(I-\tfrac{1}{2}\Delta t A^{(D)}\right)V^{1} &= V^{\tfrac{1}{2} } + \tfrac{1}{2}\Delta t A^{(J)} V^{\tfrac{1}{2}}.
\end{align*} 

Clearly, in each of the IMEX schemes under consideration, linear systems need to be solved involving the matrix 
$I-\tfrac{1}{2}\Delta t A^{(D)}$. 
Whereas this matrix is sparse, with at most nine nonzero entries per row, it possesses a large bandwidth that is directly 
proportional to $m_1$.
With a view to obtaining a further reduction in computational work, we consider three recent operator splitting 
schemes for PIDEs that employ also the (directional) splitting $A^{(D)} = A^{(M)} + A_1 + A_2$ of the two-dimensional 
convection-diffusion-reaction part, such that only linear systems with tridiagonal matrices arise.

\begin{enumerate} 
	
	\item[5.] \textit{One-step adaptation of the modified Craig--Sneyd (MCS) scheme}:
	\vskip0.1cm
	The MCS scheme is an ADI scheme that was introduced by in 't Hout \& Welfert \cite{HW09} for the numerical solution of 
	PDEs containing mixed spatial derivative terms.
	The following, direct adaptation to PIDEs has recently been investigated by in 't Hout \& Toivanen \cite{HT18}:
	\begin{equation}\label{MCS}
		\left\{\begin{array}{lll}
			Y_0 = V^{n-1} + \Delta t\, (A^{(D)}+A^{(J)})V^{n-1},\\
			Y_j = Y_{j-1} + \theta\Delta t A_j(Y_j-V^{n-1}) \quad (j=1,2),\\
			\widehat{Y}_0 = Y_0 + \theta \Delta t\, (A^{(M)}+A^{(J)})(Y_2 - V^{n-1}),\\
			\widetilde{Y}_0 = \widehat{Y}_0 + (\tfrac{1}{2}-\theta)\Delta t\, (A^{(D)}+A^{(J)})(Y_2-V^{n-1}),\\
			\widetilde{Y}_j = \widetilde{Y}_{j-1} + \theta \Delta t A_j(\widetilde{Y}_j - V^{n-1}) \quad (j=1,2),\\
			V^n = \widetilde{Y}_2,
		\end{array}\right.
	\end{equation}   
	where $\theta>0$ is a given parameter.
	Method \eqref{MCS} is of order two for any value~$\theta$.
	Here we make the common choice $\theta = \frac{1}{3}$, which is motivated by stability and accuracy results 
	for two-dimensional problems, see e.g.~\cite{HM11,HT18,HW09,HW16} and Section~\ref{SecStab}.
	It is easily verified that in \eqref{MCS} the integral part and mixed derivative term are both treated by the 
	explicit trapezoidal rule.
	We note that the explicit stages $\widehat{Y}_0$, $\widetilde{Y}_0$ can be merged, so that the integral part 
	is evaluated (just) twice per time step.
	The implicit stages $Y_j$, $\widetilde{Y}_j$ (for $j=1,2$) are often called stabilizing corrections.
	The four pertinent linear systems for these stages are tridiagonal and can be solved very efficiently 
	by means of an a priori $LU$ factorization.
	We mention that \eqref{MCS} is already applied in the first time step, i.e., for defining $V^1$ (thus 
	IMEX Euler is not used here).
	\vskip0.2cm 
	
	\item[6.] \textit{Two-step adaptation of the MCS (MCS2) scheme}:
	\vskip0.1cm
	In \cite{HT18} a second adaptation of the MCS scheme to PIDEs has been proposed where, instead of the explicit 
	trapezoidal rule, the integral part is now handled in a two-step Adams--Bashforth fashion:
	\begin{equation}\label{MCS2}
		\left\{\begin{array}{lll}
			X_0 = V^{n-1}+\Delta t A^{(D)}V^{n-1},\\
			Y_0 = X_0 + \tfrac{1}{2}\Delta t A^{(J)}(3V^{n-1}-V^{n-2}),\\
			Y_j = Y_{j-1}+\theta\Delta t A_j(Y_j-V^{n-1})\quad (j=1,2),\\
			\widehat{Y}_0 = Y_0 + \theta\Delta t A^{(M)}(Y_2-V^{n-1}),\\
			\widetilde{Y}_0 = \widehat{Y}_0 + (\tfrac{1}{2}-\theta)\Delta t A^{(D)}(Y_2-V^{n-1}),\\
			\widetilde{Y}_j = \widetilde{Y}_{j-1} + \theta \Delta t A_j(\widetilde{Y}_j-V^{n-1})\quad (j=1,2),\\
			V^n = \widetilde{Y}_2.
		\end{array}\right.
	\end{equation}   
	Method \eqref{MCS2} is also of order two for any value~$\theta$.
	We choose again $\theta = \frac{1}{3}$ and, for starting this two-step method, define $V^1$ by \eqref{MCS}.
	\vskip0.2cm 
	
	\item[7.] \textit{Stabilizing correction two-step Adams-type (SC2A) scheme}:
	\vskip0.1cm
	Hundsdorfer \& in 't Hout \cite{HH18} recently studied a novel class of stabilizing correction multistep methods 
	for the numerical solution of PDEs.
	We consider here a direct adaptation to PIDEs of a prominent member of this class, the two-step Adams-type scheme 
	called SC2A:
	\begin{equation}\label{SC2A}
		\left\{\begin{array}{lll}   
			Y_0 = V^{n-1} + \Delta t\, (A^{(M)}+A^{(J)})\sum_{i=1}^2\widehat{b}_iV^{n-i} + \Delta t\, (A_1+A_2)\sum_{i=1}^2\widecheck{b}_iV^{n-i},\\
			Y_j = Y_{j-1} + \theta \Delta t A_j(Y_j-V^{n-1})\quad (j=1,2),\\
			V^n = Y_2,
		\end{array}\right.
	\end{equation}   
	with coefficients $(\widehat{b}_1,\widehat{b}_2) = \left(\frac{3}{2},-\half\right)$ and 
	$(\widecheck{b}_1,\widecheck{b}_2) = \left(\frac{3}{2}-\theta, - \half+\theta\right)$.
	The integral part and mixed derivative term are now both handled by the two-step Adams--Bashforth scheme.
	Method \eqref{SC2A} is again of order two for any value~$\theta$. 
	Following \cite{HH18}, we take $\theta = \frac{3}{4}$, which is based upon stability and accuracy results.
	For starting \eqref{SC2A}, we define $V^1$ again by \eqref{MCS} with $\theta = \frac{1}{3}$
	\vskip0.2cm    
	
\end{enumerate}

We conclude this section with a summary of the main characteristics of the seven operator splitting schemes
in Table~\ref{summary}, indicating its order, whether it is a one- or a two-step scheme, the number of linear 
systems in each time step, and the number of evaluations of the integral part in each time step.

\begin{table}[H]
    \centering
	\caption{Main characteristics of the seven operator splitting schemes (with $l\ge 2$).}
	\vspace{0.3cm}
	\begin{tabular}{@{}l|cccc|ccc@{}}
		\toprule
		                   & CNFE & CNFI & IETR & CNAB & MCS & MCS2 & SC2A  \\
		\midrule
		 order             & 1    & $2$  & 2    & 2    & 2   & 2    & 2     \\
		 one/two-step      & 1    & 1    & 1    & 2    & 1   & 2    & 2     \\
		 linear systems    & 1    & $l$  & 1    & 1    & 4   & 4    & 2     \\
		 integral terms    & 1    & $l$  & 2    & 1    & 2   & 1    & 1     \\		
		\bottomrule
	\end{tabular}
	\label{summary}
\end{table}

%%%%%%%%%%%%%
% Section 5 %
%%%%%%%%%%%%%

\section{Stability analysis}\label{SecStab}

Let $\mu_0$, $\mu_1$, $\mu_2$, $\lambda_0$ denote any given complex numbers representing eigenvalues of the matrices 
$A^{(M)}$, $A_1$, $A_2$, $A^{(J)}$, respectively.
For the stability analysis of the operator splitting schemes from Section~\ref{SecTime} we consider the linear 
scalar test equation
\begin{equation}\label{testeqn}
V'(t) = (\mu_0+\mu_1+\mu_2+\lambda_0)\, V(t).
\end{equation}
Let $z_j = \mu_j\, \Delta t$ (for $j=0,1,2$) and $w_0 = \lambda_0\, \Delta t$ and write $w=z_0+z_1+z_2$, 
$p=(1-\theta z_1)(1-\theta z_2)$.
Application of the three IMEX schemes \eqref{CNFE}, \eqref{CNFI}, \eqref{IETR} to test equation \eqref{testeqn} 
yields a one-step linear recurrence relation of the form
\begin{equation*}
V^{n} = R(w,w_0)\, V^{n-1}\,,
\end{equation*}
where
\begin{eqnarray}
\textrm{for~\eqref{CNFE}}: &&R(w,w_0) = \label{R_CNFE}
\frac{1+\tfrac{1}{2}w+w_0}{1-\tfrac{1}{2}w}\,,\\
\textrm{for~\eqref{CNFI}}: &&R(w,w_0) = \label{R_CNFI}
\left( \frac{\tfrac{1}{2} w_0}{1-\tfrac{1}{2}w} \right)^l + \sum_{k=0}^{l-1} 
\left( \frac{\tfrac{1}{2} w_0}{1-\tfrac{1}{2}w} \right)^k \cdot 
\frac{1+\tfrac{1}{2}w+\tfrac{1}{2} w_0}{1-\tfrac{1}{2}w}\,,\\
\textrm{for~\eqref{IETR}}: &&R(w,w_0) = \label{R_IETR}
\frac{1+\tfrac{1}{2}w+w_0+\tfrac{1}{2}(w+w_0)w_0}{1-\tfrac{1}{2}w}\,.
\end{eqnarray}
Application of the IMEX scheme \eqref{CNAB} to \eqref{testeqn} yields a two-step linear recurrence relation
\begin{equation*}
V^{n} = R_1(w,w_0)\, V^{n-1}+R_0(w,w_0)\, V^{n-2}\,,
\end{equation*}
with
\begin{eqnarray}
&&R_1(w,w_0) = \label{R1_CNAB}
\frac{1+\tfrac{1}{2}w+\tfrac{3}{2}w_0}{1-\tfrac{1}{2}w}\,,\\
&&R_0(w,w_0) = \label{R0_CNAB}
\frac{-\tfrac{1}{2}w_0}{1-\tfrac{1}{2}w}\,.
\end{eqnarray}
Application of the ADI scheme \eqref{MCS} to \eqref{testeqn} gives a one-step linear recurrence of the
form
\begin{equation*}
V^{n} = R(z_0,z_1,z_2,w_0)\, V^{n-1}\,,
\end{equation*}
where
\begin{equation}\label{R_MCS}
R(z_0,z_1,z_2,w_0) = 
1+\frac{w+w_0}{p}+\theta\frac{(z_0+w_0)(w+w_0)}{p^2}+(\tfrac{1}{2}-\theta)\frac{(w+w_0)^2}{p^2}\,.
\end{equation}
Application of the two ADI schemes \eqref{MCS2}, \eqref{SC2A} to \eqref{testeqn} gives a two-step linear 
recurrence
\begin{equation*}
V^{n} = R_1(z_0,z_1,z_2,w_0)\, V^{n-1}+R_0(z_0,z_1,z_2,w_0)\, V^{n-2}\,.
\end{equation*}
Here, for \eqref{MCS2},
\begin{eqnarray}
&&R_1(z_0,z_1,z_2,w_0) = \label{R1_MCS2}
1+(w+\tfrac{3}{2}w_0)\left(\frac{1}{p}+\theta\frac{z_0}{p^2}+(\tfrac{1}{2}-\theta)\frac{w}{p^2}\right),\\
&&R_0(z_0,z_1,z_2,w_0) = \label{R0_MCS2}
-\tfrac{1}{2}w_0\left(\frac{1}{p}+\theta\frac{z_0}{p^2}+(\tfrac{1}{2}-\theta)\frac{w}{p^2}\right)\,.
\end{eqnarray}
Next, for \eqref{SC2A},
\begin{eqnarray}
&&R_1(z_0,z_1,z_2,w_0) = \label{R1_SC2A}
1+\frac{1}{p} \left( \widehat{b}_1 (z_0+w_0) + \widecheck{b}_1 (z_1+z_2) \right),\\
&&R_0(z_0,z_1,z_2,w_0) = \label{R0_SC2A}
\frac{1}{p} \left( \widehat{b}_2 (z_0+w_0) + \widecheck{b}_2 (z_1+z_2) \right) \,.
\end{eqnarray}

In the stability analysis of ADI schemes for two-dimensional convection-diffusion equations with 
mixed derivative term (and without integral term), the following condition on $z_0$, $z_1$, $z_2$ 
plays a main role:
\begin{equation}\label{stabcond}
|z_0|\le 2\sqrt{\Re z_1\Re z_2}\,,~~ \Re z_1\le 0\,,~~ \Re z_2\le 0,
\end{equation}
where $\Re$ denotes the real part of a complex number.
Condition \eqref{stabcond} has been shown to hold \cite{HW07} in the von Neumann framework where 
semidiscretization is performed by second-order central finite differences on uniform, Cartesian 
grids and periodic boundary condition.
We note that it is readily seen that \eqref{stabcond} implies $\Re w \le 0$.

The subsequent stability results are relevant to the natural situation (for finite activity jumps) 
where $|\lambda_0| T$ is of moderate size.
Our analysis relies upon a polynomial expansion in $w_0 = \lambda_0\, \Delta t$. 
Such an argument has previously been employed in \cite{HT18, KLR18} 
(cf.~also~e.g.~\cite[p.385,386]{HV03book} in a PDE context). 

For stability of a one-step linear recurrence, we consider power-boundedness of $R$, and for 
stability of a two-step linear recurrence, power-boundedness of the $2\times 2$ companion matrix 
\begin{equation*}
C
= 
\begin{pmatrix}
\, R_1 & R_0\, \\
1 & 0
\end{pmatrix}.
\end{equation*}
Denote by $\| \cdot \|$ the maximum norm for matrices.
The first result deals with the IMEX schemes \eqref{CNFE}--\eqref{CNAB}.
\begin{theorem}\label{theorem1}
Let $l\ge 1$ and $c = \sum_{k=0}^{l-1} \left(\tfrac{1}{2} |\lambda_0|T\right)^k$. Then:
\begin{itemize}
\item[\bf (a)] for the scheme \eqref{CNFE} there holds $|R(w,w_0)^n| \le e^{|\lambda_0|t_n}$,
\item[\bf (b)] for the scheme \eqref{CNFI} there holds $|R(w,w_0)^n| \le e^{c|\lambda_0|t_n}$,
\item[\bf (c)] for the scheme \eqref{IETR} there holds $|R(w,w_0)^n| \le e^{|\lambda_0|t_n}$,
\item[\bf (d)] for the scheme \eqref{CNAB} there holds $\|C(w,w_0)^n\| \le e^{2|\lambda_0|t_n}$
\end{itemize}
whenever $w\in \C$, $\Re w \le 0$.
\end{theorem}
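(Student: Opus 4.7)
The strategy is to exploit A-stability of the Crank--Nicolson kernel $R_{\rm CN}(w) = (1+\tfrac{1}{2} w)/(1-\tfrac{1}{2} w)$, which satisfies $|R_{\rm CN}(w)|\le 1$, together with the elementary estimate $|1-\tfrac{1}{2} w|^{-1}\le 1$, both valid for $\Re w\le 0$. With these two ingredients every amplification factor in \eqref{R_CNFE}--\eqref{R0_CNAB} decomposes naturally into a Crank--Nicolson piece plus explicit $w_0$-corrections. The inequality $1+x\le e^x$ (for $x\ge 0$) then converts the resulting polynomial estimate into an exponential one, and taking $n$-th powers (using submultiplicativity of $\|\cdot\|$ in the two-step case) delivers the claimed bound.

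For (a) I would rewrite \eqref{R_CNFE} as $R(w,w_0) = R_{\rm CN}(w)+w_0/(1-\tfrac{1}{2} w)$, which at once gives $|R|\le 1+|w_0|\le e^{|w_0|}$, so $|R^n|\le e^{n|w_0|}=e^{|\lambda_0|t_n}$. For (c), I would expand the numerator of \eqref{R_IETR} as $(1+\tfrac{1}{2} w)+w_0\bigl(1+\tfrac{1}{2} w+\tfrac{1}{2} w_0\bigr)$, so that $R = R_{\rm CN}(w)+w_0\bigl(R_{\rm CN}(w)+\tfrac{1}{2} w_0/(1-\tfrac{1}{2} w)\bigr)$, whence $|R|\le 1+|w_0|+\tfrac{1}{2}|w_0|^2\le e^{|w_0|}$ (using $e^x\ge 1+x+\tfrac{1}{2}x^2$ for $x\ge 0$), and the $n$-th power bound follows.

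Part (b) requires slightly more bookkeeping. Applying the same splitting to the summand in \eqref{R_CNFI} and setting $\alpha = \tfrac{1}{2}|w_0|$ yields $|R|\le \alpha^l+(1+\alpha)\sum_{k=0}^{l-1}\alpha^k$. Collecting powers of $\alpha$, this rearranges to $1+2\alpha\sum_{k=0}^{l-1}\alpha^k = 1+|w_0|\sum_{k=0}^{l-1}\alpha^k$. Since $\alpha \le \tfrac{1}{2}|\lambda_0|T$, the geometric sum is majorised by the constant $c$ in the statement, giving $|R|\le 1+c|w_0|\le e^{c|w_0|}$ and hence $|R^n|\le e^{c|\lambda_0|t_n}$.

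For (d) the Crank--Nicolson splitting applied to \eqref{R1_CNAB} and \eqref{R0_CNAB} gives $|R_1|\le 1+\tfrac{3}{2}|w_0|$ and $|R_0|\le \tfrac{1}{2}|w_0|$. The maximum (row-sum) norm of the $2\times2$ companion matrix then satisfies $\|C\|\le \max(|R_1|+|R_0|,\,1) \le 1+2|w_0|$, and submultiplicativity yields $\|C^n\|\le(1+2|w_0|)^n \le e^{2n|w_0|}=e^{2|\lambda_0|t_n}$. Overall the computations are routine once the Crank--Nicolson/$w_0$ decomposition is adopted; the only places requiring care are the algebraic rearrangement of the geometric sum in (b) so that the constant $c$ emerges cleanly, and the observation in (d) that the relevant matrix norm is the row-sum norm so that $|R_0|$ contributes to $\|C\|$.
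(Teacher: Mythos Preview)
Your proof is correct and follows essentially the same approach as the paper: both introduce the Crank--Nicolson kernel $P=(1+\tfrac{1}{2}w)/(1-\tfrac{1}{2}w)$ and $Q=\tfrac{1}{2}(1-\tfrac{1}{2}w)^{-1}$, bound $|P|\le 1$, $|Q|\le\tfrac{1}{2}$ under $\Re w\le 0$, and then perform exactly the same decompositions and estimates (including the identical geometric-sum rearrangement in~(b) and the row-sum norm bound in~(d)). The only cosmetic differences are that the paper treats~(a) as the $l=1$ case of~(b) and writes the decomposition in~(c) as $R=(1+w_0)P+w_0^2 Q$, while in~(d) it splits the companion matrix as a sum of two matrices rather than bounding $|R_1|,|R_0|$ separately---but the resulting inequalities are identical.
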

\begin{proof} 
It is sufficient to prove the upper bounds for $n=1$.
Define $P=R(w,0)=\left(1+\tfrac{1}{2}w\right)\left(1-\tfrac{1}{2}w\right)^{-1}$ and
$Q=\tfrac{1}{2} \left(1-\tfrac{1}{2}w\right)^{-1}$.
Since $\Re w \le 0$ we have $|P|\le 1$ and $|Q|\le \tfrac{1}{2}$.

(a) The scheme \eqref{CNFE} is identical to \eqref{CNFI} with $l=1$ and the bound is given by 
part (b) with $l=1$.

(b) Let $\omega = \tfrac{1}{2} |w_0| = \tfrac{1}{2}|\lambda_0| \, \Delta t$. For \eqref{CNFI}, one can write
\begin{equation*}
R(w,w_0) = \left( w_0 Q \right)^l + \sum_{k=0}^{l-1} \left( w_0 Q \right)^k \cdot \left( P + w_0 Q \right).
\end{equation*}
It follows that
\begin{equation*}
|R(w,w_0)| \le \omega^l + \sum_{k=0}^{l-1} \omega^k (1+\omega) = 1+ 2\omega \sum_{k=0}^{l-1} \omega^k
\le 1+c|\lambda_0| \, \Delta t \le e^{c|\lambda_0| \, \Delta t}.
\end{equation*}

(c) For \eqref{IETR}, we have
\begin{equation*}
R(w,w_0) = (1+w_0)P + w_0^2 Q,
\end{equation*}
and hence 
\begin{equation*}
|R(w,w_0)| \le 1+|\lambda_0| \, \Delta t + \tfrac{1}{2}(|\lambda_0| \, \Delta t)^2 \le e^{|\lambda_0| \, \Delta t}.
\end{equation*}

(d) For \eqref{CNAB},
\begin{equation*}   
C(w,w_0) =
\begin{pmatrix}
P & 0\, \\
1 & 0
\end{pmatrix}
+
w_0 Q
\begin{pmatrix}
3 & -1 \\
0 & 0
\end{pmatrix}.
\end{equation*}
Consequently,
\begin{equation*}
\|C(w,w_0)\| \le 1+ 2|\lambda_0| \, \Delta t \le e^{2|\lambda_0| \, \Delta t}.
\end{equation*}
\end{proof}

\begin{remark}
In the case where $|\lambda_0| \, \Delta t \le c_0$ with constant $c_0\in (0,2)$,
the constant $c$ in Theorem \ref{theorem1} can be replaced by $c=(1-c_0/2)^{-1}$.
\end{remark}

For the MCS scheme, we have the following positive stability result.
\begin{theorem}\label{theorem2}
Let $c = \max\{1/\theta\,,\,2\}$.
For the scheme \eqref{MCS} there holds:
\begin{itemize}
\item[\bf (a)] If $\theta\ge \frac{1}{3}$, then $|R(z_0,z_1,z_2,w_0)^n| \le e^{c|\lambda_0|t_n}$ 
whenever $z_0, z_1, z_2 \in \R$ satisfy \eqref{stabcond},
\item[\bf (b)] If $\frac{1}{2}\le \theta \le 1$, then  $|R(z_0,z_1,z_2,w_0)^n| \le e^{c|\lambda_0|t_n}$ 
whenever $z_0, z_1, z_2 \in \C$ satisfy \eqref{stabcond}.
\end{itemize}
\end{theorem}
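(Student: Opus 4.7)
The plan is to exploit a polynomial expansion of the amplification factor in $w_0 = \lambda_0\Delta t$, in the spirit of \cite{HT18, KLR18}. The starting observation is that $w_0$ enters \eqref{R_MCS} only through the combinations $w+w_0 = (z_0+w_0)+z_1+z_2$ and $z_0+w_0$, so that
\begin{equation*}
R(z_0,z_1,z_2,w_0) = R_{\mathrm{PDE}}(z_0+w_0,z_1,z_2),
\end{equation*}
where $R_{\mathrm{PDE}}(z_0,z_1,z_2) := R(z_0,z_1,z_2,0)$ is the amplification factor of the MCS scheme applied to the pure two-dimensional PDE problem. Since $R_{\mathrm{PDE}}$ is a polynomial of degree two in its first argument with constant second derivative $1/p^{2}$, this factorization yields the exact expansion
\begin{equation*}
R(z_0,z_1,z_2,w_0) = R_{\mathrm{PDE}}(z_0,z_1,z_2) + w_0\, \partial_{z_0} R_{\mathrm{PDE}}(z_0,z_1,z_2) + \frac{w_0^{\,2}}{2p^{\,2}}.
\end{equation*}

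Two of the three resulting coefficients are straightforward to control. The classical von Neumann stability analysis of the MCS scheme for two-dimensional convection-diffusion PDEs (cf.~\cite{HM11,HW09,HW16}) yields $|R_{\mathrm{PDE}}(z_0,z_1,z_2)|\le 1$ under \eqref{stabcond}: for real parameters whenever $\theta\ge \tfrac{1}{3}$, and for complex parameters whenever $\tfrac{1}{2}\le\theta\le 1$, which cover parts (a) and (b), respectively. Moreover, $\Re z_1, \Re z_2 \le 0$ together with $\theta>0$ gives $|1-\theta z_j|\ge 1$ and hence $|p|\ge 1$, so $|w_0^{\,2}/(2p^{\,2})|\le \tfrac{1}{2}|w_0|^{\,2}$.

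The main technical step is a uniform bound
\begin{equation*}
\left|\partial_{z_0}R_{\mathrm{PDE}}(z_0,z_1,z_2)\right| = \left|\frac{p + z_0 + (1-\theta)(z_1+z_2)}{p^{\,2}}\right| \le c,
\end{equation*}
with $c = \max\{1/\theta,2\}$, valid over all triples $(z_0,z_1,z_2)$ satisfying \eqref{stabcond}. For part (a), substituting $z_j=-x_j$ with $x_j\ge 0$ makes $p = (1+\theta x_1)(1+\theta x_2)$ real and positive, and one uses $|z_0|\le 2\sqrt{x_1x_2}\le x_1+x_2$ together with $p\ge 1+\theta(x_1+x_2)$ to reduce the bound to an elementary optimization in the single variable $x_1+x_2$. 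For part (b) the complex factorization of $p$ must be retained and the numerator controlled against $|p|^{2}$ using $|1-\theta z_j|^{2} = (1-\theta\Re z_j)^{2} + \theta^{2}(\Im z_j)^{2}$ in tandem with $|z_0|\le 2\sqrt{\Re z_1\,\Re z_2}$. I expect this derivative estimate---and in particular the exact $\theta$-dependent constant---to be the main obstacle, with part (b) notably more delicate than part (a).

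Combining the three bounds gives $|R(z_0,z_1,z_2,w_0)|\le 1 + c|w_0| + \tfrac{1}{2}|w_0|^{\,2}$. Since $c\ge 2\ge 1$ one has $\tfrac{1}{2}|w_0|^{\,2} \le \tfrac{1}{2}(c|w_0|)^{\,2}$, and therefore
\begin{equation*}
|R| \;\le\; 1 + c|w_0| + \tfrac{1}{2}(c|w_0|)^{\,2} \;\le\; e^{\,c|w_0|} \;=\; e^{\,c|\lambda_0|\Delta t}.
\end{equation*}
Raising to the $n$-th power immediately yields the asserted bound $|R^n|\le e^{c|\lambda_0|t_n}$.
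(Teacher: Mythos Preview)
Your overall structure---expanding $R$ as $P + w_0\,Q + \tfrac{1}{2}w_0^{2}/p^{2}$ with $P = R(z_0,z_1,z_2,0)$ and $Q = \partial_{z_0}R_{\mathrm{PDE}}$, then bounding each piece and exponentiating---is exactly what the paper does. Your Taylor framing via $R(z_0,z_1,z_2,w_0) = R_{\mathrm{PDE}}(z_0+w_0,z_1,z_2)$ is a clean way to see why the expansion terminates at second order, but the resulting $Q$ coincides with the paper's.

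The difference lies in the estimate $|Q|\le c$, which you rightly identify as the crux and leave essentially open for part~(b). The paper sidesteps any real/complex case split. Using $|p|\ge 1$ it writes
\begin{equation*}
|Q| \;\le\; \left| 1 + \frac{z_0}{p} + (1-\theta)\,\frac{z_1+z_2}{p} \right|
\;=\; \left| \Bigl(1 + \frac{w}{p}\Bigr) - \theta\,\frac{z_1+z_2}{p} \right|
\;\le\; \left| 1 + \frac{w}{p} \right| + \theta\left|\frac{z_1+z_2}{p}\right|,
\end{equation*}
and then invokes two ingredients that hold uniformly under \eqref{stabcond} for both real and complex $z_j$: the bound $\bigl|1+w/p\bigr| \le \bigl|1-\tfrac{1}{2\theta}\bigr| + \tfrac{1}{2\theta}$ from \cite[Sect.~2.1]{HW07}, and the elementary $|p|\ge \theta|z_1+z_2|$. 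This gives $|Q|\le \bigl|1-\tfrac{1}{2\theta}\bigr| + \tfrac{1}{2\theta} + 1 = c$ at once. So your anticipated ``main obstacle'' in the complex case disappears as soon as one recognises the combination $1+w/p$ inside $Q$ and appeals to the existing MCS stability literature; there is no need for the direct optimisation you sketch. After this, $|P|\le 1$ is supplied by \cite[Thm.~2.5]{HW09} for~(a) and \cite[Thm.~2.7]{HM11} for~(b), exactly the references you cite, and the final exponential bound follows as in your last display.
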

\begin{proof} 
It suffices again to consider $n=1$.
Define
\begin{equation*}
P = R(z_0,z_1,z_2,0) ~~\textrm{and}~~ Q = \frac{1}{p}+\frac{z_0}{p^2}+(1-\theta)\frac{z_1+z_2}{p^2}\,.
\end{equation*}
It is easily verified that
\begin{equation*}
R(z_0,z_1,z_2,w_0) = P + w_0 Q + \tfrac{1}{2} \frac{w_0^2}{p^2}\,.
\end{equation*}
By \cite[Sect.~2.1]{HW07} we immediately have that the condition \eqref{stabcond} implies 
\begin{equation*}
\left| 1+\frac{w}{p} \right| \le \left| 1 - \frac{1}{2\theta}\right| + \frac{1}{2\theta}\,.
\end{equation*}
Next, it is readily seen that $|p|\ge 1$ and $|p|\ge \theta |z_1+z_2|$.
Hence,
\begin{equation*}
|Q| \le
\left| 1+\frac{z_0}{p}+(1-\theta)\frac{z_1+z_2}{p} \right| \le
\left| 1+\frac{w}{p} \right| + \theta \left| \frac{z_1+z_2}{p} \right|\le
\left| 1 - \frac{1}{2\theta}\right| + \frac{1}{2\theta} + 1 = c.
\end{equation*}
Consequently, if $|P|\le 1$, then we obtain
\begin{equation*}
| R(z_0,z_1,z_2,w_0) | \le 
1+ c|\lambda_0| \, \Delta t + \tfrac{1}{2}(|\lambda_0| \, \Delta t)^2 \le e^{c |\lambda_0| \, \Delta t} \,.
\end{equation*}
Parts (a) and (b) now directly follow by invoking \cite[Thm.~2.5]{HW09} and \cite[Thm.~2.7]{HM11}, 
respectively, which provide sufficient conditions for $|P|\le 1$ under \eqref{stabcond}.
\end{proof}

In a similar fashion, the subsequent stability result has been obtained in \cite[Thm.~3.7]{HT18}
for the MCS2 scheme.
\begin{theorem}\label{theorem3}
Let $c = \max\{1/\theta\,,\,2\}$. For the scheme \eqref{MCS2} there holds:
\begin{itemize}
\item[\bf (a)] If $\theta\ge \frac{1}{3}$, then $\|C(z_0,z_1,z_2,w_0)^n\|\le e^{2c|\lambda_0|t_n}$ 
whenever $z_0, z_1, z_2 \in \R$ satisfy \eqref{stabcond},
\item[\bf (b)] If $\frac{1}{2}\le \theta \le 1$, then $\|C(z_0,z_1,z_2,w_0)^n\|\le e^{2c|\lambda_0|t_n}$
 whenever $z_0, z_1, z_2 \in \C$ satisfy \eqref{stabcond}.
\end{itemize}
\end{theorem}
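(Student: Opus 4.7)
The plan is to combine the polynomial-in-$w_0$ expansion used in the proof of Theorem~\ref{theorem2} (for MCS) with the companion-matrix bookkeeping used in part~(d) of Theorem~\ref{theorem1} (for the two-step CNAB scheme). As in those proofs, it suffices to establish the single-step bound $\|C\|\le e^{2c|\lambda_0|\Delta t}$, since then $\|C^n\|\le \|C\|^n\le e^{2c|\lambda_0|t_n}$.

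First I would rewrite \eqref{R1_MCS2}--\eqref{R0_MCS2} in the factorised form $R_1 = P + \tfrac{3}{2} w_0 Q$ and $R_0 = -\tfrac{1}{2} w_0 Q$, where $P = R(z_0,z_1,z_2,0)$ is the MCS amplification factor at $w_0 = 0$ and
\[
Q = \frac{1}{p} + \theta\frac{z_0}{p^2} + \left(\tfrac{1}{2}-\theta\right)\frac{w}{p^2}.
\]
This exhibits the companion matrix as
\[
C(z_0,z_1,z_2,w_0) = \begin{pmatrix} P & 0 \\ 1 & 0 \end{pmatrix} + w_0 Q \begin{pmatrix} 3/2 & -1/2 \\ 0 & 0 \end{pmatrix},
\]
so in the maximum row-sum norm $\|C\|\le |P| + 2|w_0||Q|$ as soon as $|P|\le 1$. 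The task therefore reduces to two independent scalar estimates: $|P|\le 1$ and $|Q|\le c$.

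The bound $|P|\le 1$ is exactly the PDE-only stability of MCS and is supplied, under hypotheses~(a) and~(b) respectively, by the same results \cite[Thm.~2.5]{HW09} and \cite[Thm.~2.7]{HM11} invoked at the end of the proof of Theorem~\ref{theorem2}. For $|Q|\le c$ I would mimic the manipulations used there for the (slightly different) MCS $Q$: using $|p|\ge 1$, $|p|\ge \theta|z_1+z_2|$ together with the key consequence $|1+w/p|\le |1-1/(2\theta)|+1/(2\theta)$ of \eqref{stabcond} proved in \cite[Sect.~2.1]{HW07}, a short rearrangement such as $Q = p^{-1}(1 + \tfrac{1}{2}(w/p) - \theta(z_1+z_2)/p)$ produces the stated constant $c = \max\{1/\theta,2\}$. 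Combining, $\|C\|\le 1 + 2c|\lambda_0|\Delta t \le e^{2c|\lambda_0|\Delta t}$ and the result follows.

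The main obstacle is precisely this last step: although structurally parallel to the MCS case, the coefficients inside $Q$ differ from those of the MCS $Q$, so one has to verify by explicit algebraic bookkeeping that the same constant $c$ still emerges, so that the conclusion lines up with that of Theorem~\ref{theorem2}. This verification is exactly the content of \cite[Thm.~3.7]{HT18}.
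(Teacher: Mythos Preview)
Your proposal is correct and follows exactly the route the paper indicates (``in a similar fashion'' to Theorem~\ref{theorem2}, combined with the companion-matrix splitting of Theorem~\ref{theorem1}(d)); indeed the paper does not spell out the argument but refers to \cite[Thm.~3.7]{HT18}, whose content is precisely your outline. One tiny slip: the row-sum norm of $\bigl(\begin{smallmatrix}P&0\\1&0\end{smallmatrix}\bigr)$ is $\max(|P|,1)$, not $|P|$, but since you immediately impose $|P|\le 1$ and use $1$ in the final bound this is harmless.
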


In Theorems \ref{theorem2} and \ref{theorem3} above, the parts (a) and (b) are relevant to, respectively, 
diffusion-dominated and convection-dominated equations.

\begin{remark}
In \cite[Thm.~3.3]{M16} it has been proved that for any given $\theta \in [\tfrac{1}{4},\tfrac{1}{2})$ 
there exists a value $\gamma \in [\tfrac{1}{2},1)$ such that $|R(z_0,z_1,z_2,0)| \le 1$ for all complex 
numbers $z_0, z_1, z_2$ satisfying the natural condition
\begin{equation}\label{stabcond2}
|z_0|\le 2\gamma \sqrt{\Re z_1\Re z_2}\,,~~ \Re z_1\le 0\,,~~ \Re z_2\le 0.
\end{equation}
The quantity $\gamma$ can be viewed as a bound on the relative size of the mixed derivative coefficient, 
that is, on $|\rho|$.
For the pertinent $\gamma$, the stability bounds in Theorems \ref{theorem2} and \ref{theorem3} for the 
schemes \eqref{MCS} and \eqref{MCS2} also hold whenever $z_0,z_1,z_2\in\C$ satisfy \eqref{stabcond2}.
In the special case $\theta =\tfrac{1}{3}$ the result from \cite{M16} yields that
$\gamma \ge (2+\sqrt{10})/6 \approx 0.86$ and it has been conjectured in \cite{HM11} that the optimal
value $\gamma \approx 0.96$.
Hence, in this case, \eqref{stabcond2} forms only a slightly stronger condition than \eqref{stabcond}.
\end{remark}

For the SC2A scheme \eqref{SC2A}, deriving a stability bound of the kind in Theorems \ref{theorem1}(d) 
and \ref{theorem3} is more complicated, among others due to the fact that the implicit method for the 
PDE part is a not a one-step but a two-step method. We shall leave this topic for future research.
We mention, however, that for \eqref{SC2A} the positive result has been obtained in \cite[Thm.~4.2]{HH18}
that if $\theta \ge \tfrac{2}{3}$, then for any given $z_0, z_1, z_2 \in \R$ satisfying \eqref{stabcond} 
the pertinent matrix $C(z_0,z_1,z_2,0)$ is power-bounded.

The stability results relevant to the scalar test equation \eqref{testeqn} can directly be employed in 
a stability analysis of the splitting schemes for two-dimensional PIDEs with constant convection and 
diffusion coefficients and with the integral term representing a two-dimensional cross-correlation
(or convolution). 
It is well-known that the transformation to the log-price variable $x_i = \ln(s_i)$ (for $i=1,2$) turns
\eqref{PIDE2D} with general probability density function $f$ into such a PIDE, where the spatial domain 
is $\R^2$.
The obtained probability density function in our case is that of the double exponential distribution.
Semidiscretization on a uniform Cartesian grid on the whole $\R^2$ domain with second-order central finite 
differences for convection and diffusion and a standard discretization (employing bilinear interpolation as 
in Subsection~\ref{SecIntpart}) of the double integral then yields a linear system of ODEs with doubly 
infinite matrix (operator) $A$ that is of block Laurent type. 
Its constituent matrices $A^{(M)}$, $A_1$, $A_2$, $A^{(J)}$ are also of this type, and stability can
be analyzed in the $l_2$-norm by means of a Fourier transform, 
cf.~\cite[Sect.~3.4]{KLR18}.
In particular, the (scaled) eigenvalues of these matrices, which are given through their symbols, are 
readily seen to satisfy \eqref{stabcond} and $|\lambda_0| \le \lambda$.
Consequently, the stability theorems of this section can be used to arrive at positive, unconditional 
stability results for the operator splitting schemes of Section~\ref{SecTime} when applied to the PIDE 
for the log-price variable. 
For the sake of brevity, we shall omit the details here.

Additional, complementary stability results relevant to (one- and two-dimensional) PIDEs have been derived 
in \cite{CF08} for the scheme \eqref{CNFI}, in \cite{ST14} for the scheme \eqref{CNAB} and in \cite{HT18} 
for the schemes \eqref{MCS} and \eqref{MCS2}.

%%%%%%%%%%%%%
% Section 6 %
%%%%%%%%%%%%%
	
\section{Numerical study}\label{SecResults}
In this section we present ample numerical experiments for the seven operator splitting schemes formulated in 
Section~\ref{SecTime}, which provides important insight in their convergence behavior and mutual performance.
For the CNFI scheme \eqref{CNFI} we choose $l=2$ iterations, as noted in Section~\ref{SecTime}.
The experiments involve three different parameter sets for the two-asset Kou model, labelled as 1, 2, and~3. 
The parameter set~1 is taken from Clift \& Forsyth \cite{CF08}. 
Set~2 is a blend of parameters considered for the one-asset Kou model by Almendral \& Oosterlee~\cite{AO05} and d'Halluin, Forsyth 
\& Vetzal~\cite{HFV05} (see also Toivanen~\cite{T08}). 
Set~3 has been newly constructed and includes a relatively large value for the product $\lambda T$, i.e., the expected number of 
jumps in $[0,T]$. 
For the truncated spatial domain, we (heuristically) select $S_{\rm max} = 20K, 10K, 30K$ for sets 1, 2, 3, respectively.

\begin{table}[H]
    \centering
	\caption{Parameter sets for the two-asset Kou jump-diffusion model.}
	\vspace{0.3cm}
	\begin{tabular}{@{}llllllllllllll@{}}
		\toprule
		& $\sigma_1$ & $\sigma_2$ & $r$ & $\rho$ & $\lambda$ & $p_1$ & $p_2$ & $\eta_{p_1}$ & $\eta_{q_1}$ & $\eta_{p_2}$ & $\eta_{q_2}$ & $K$ & $T$ \\ \midrule
		Set 1 & 0.12 & 0.15 & 0.05 & 0.30 & 0.50 & 0.40 & 0.60 & $1/0.20$ & $1/0.15$ & $1/0.18$ & $1/0.14$ & 100 & 1 \\
		Set 2 & 0.15 & 0.20 & 0.05 & 0.50 & 0.20 & 0.3445 & 0.50 & 3.0465 & 3.0775 & 3 & 2 & 100 & 0.2 \\
		Set 3 & 0.20 & 0.30 & 0.05 & 0.70 & 8 & 0.60 & 0.65 & 5 & 4 & 4 & 3 & 100 & 1 \\ \bottomrule
	\end{tabular}
	\label{paramsets}
\end{table}

\subsection{Option values and numerical convergence behavior}\label{SecResults1}
Figure~\ref{FigSurfacePlots} shows the numerically computed\footnote{Using $m_1=m_2=200$, $N=100$ and the MCS2 scheme.} 
option value surfaces for the European put-on-the-average option and the three parameter sets from Table~\ref{paramsets} 
on the asset price domain $[0,3K]\times[0,3K]$ and $t=T$.

\begin{figure}[H]
	\centering
	\includegraphics[trim = 1.4in 3.7in 1.3in 3.7in, clip, scale=0.5]{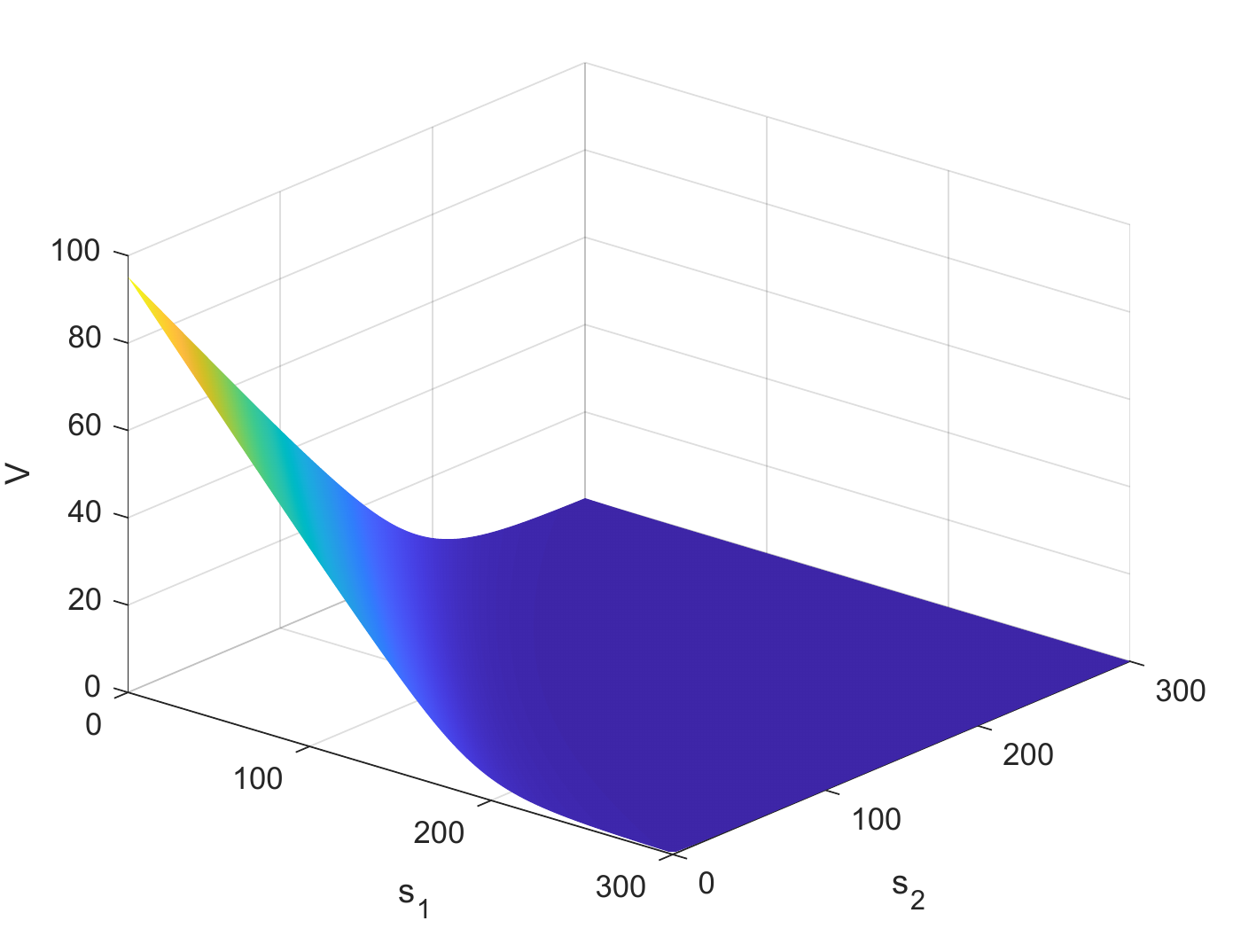}	
	\includegraphics[trim = 1.4in 3.7in 1.3in 3.7in, clip, scale=0.5]{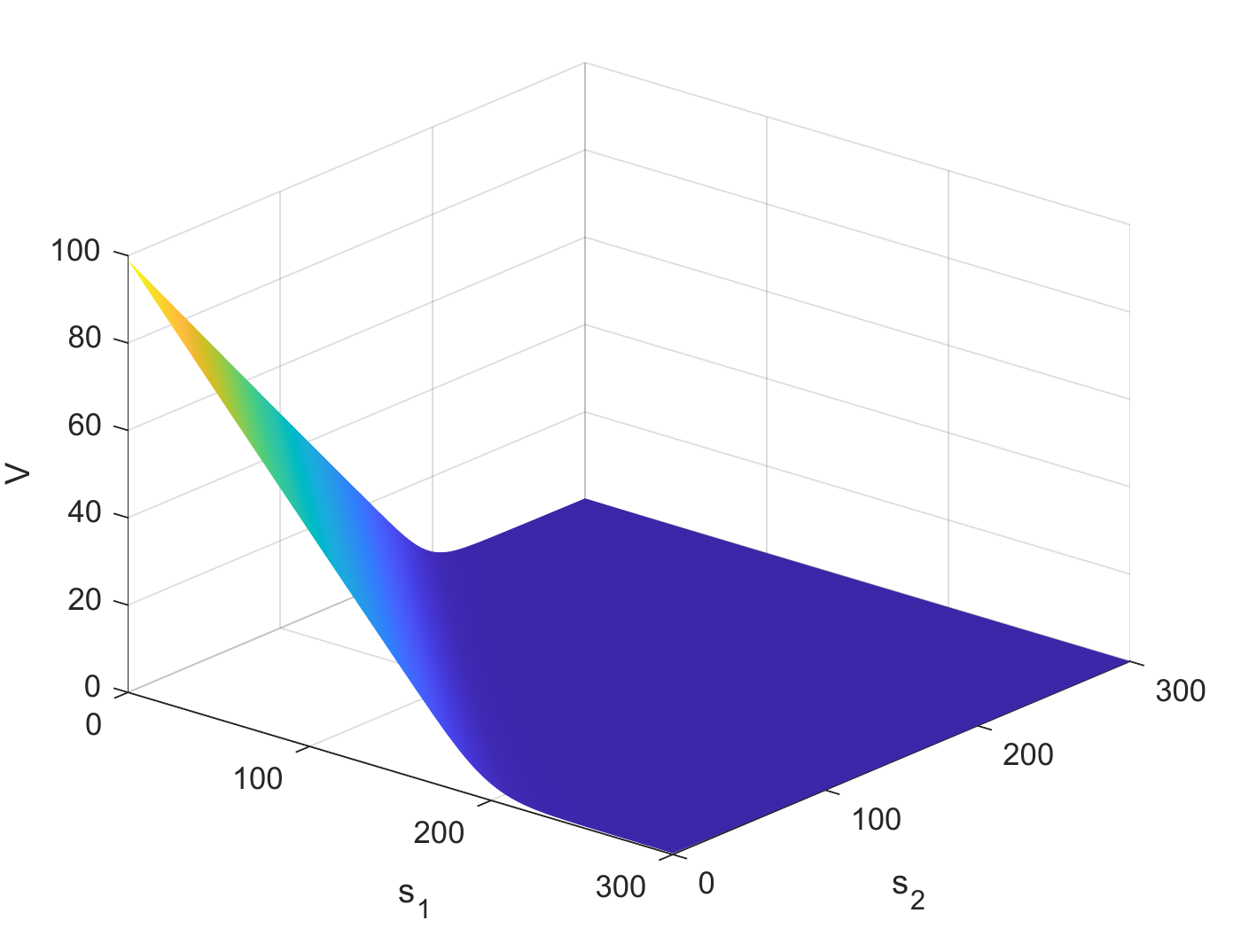}	
	\includegraphics[trim = 1.3in 3.7in 1.3in 3.7in, clip, scale=0.5]{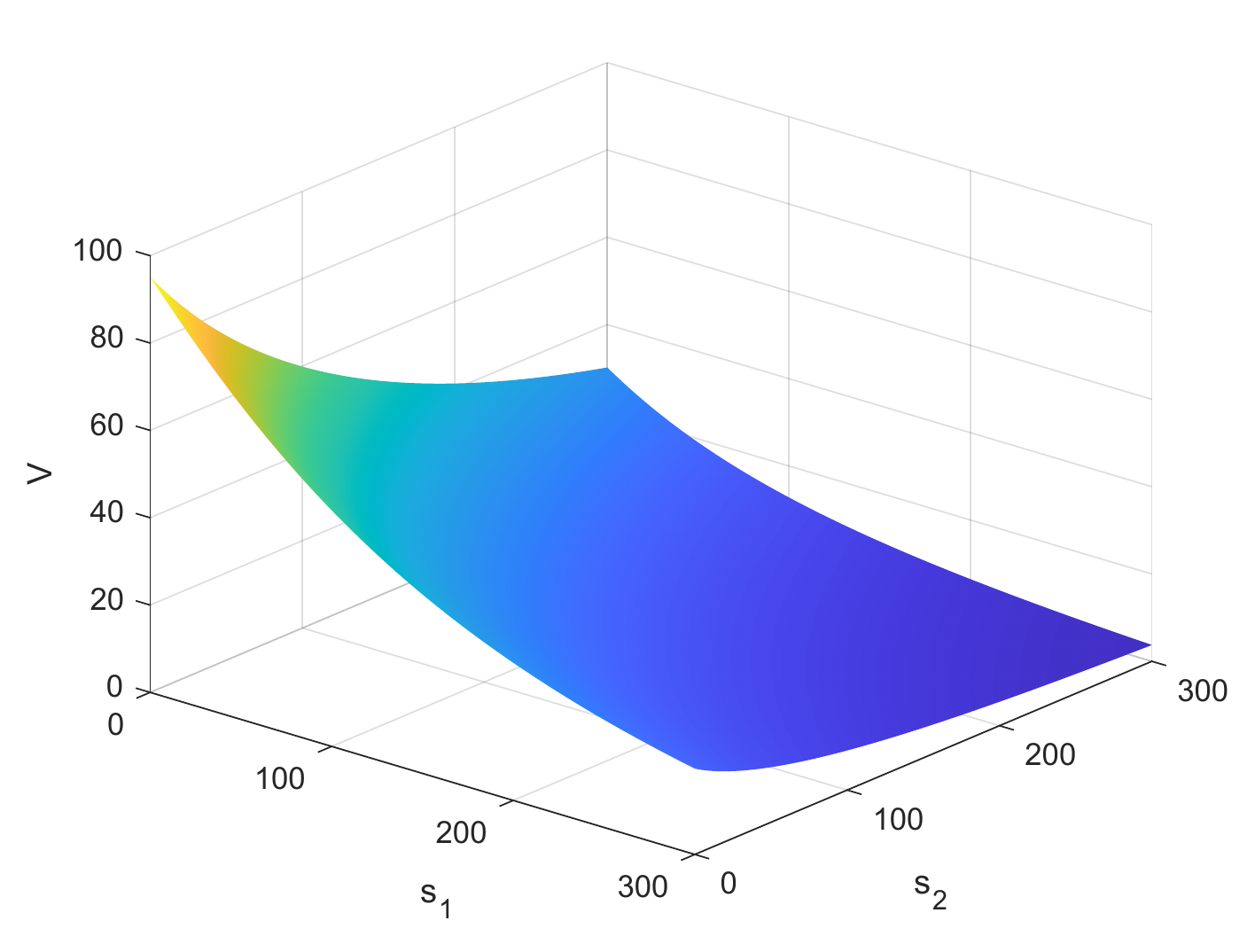}
	\caption{Option value surfaces for the European put-on-the-average option under the two-asset Kou model in the case of parameter 
	set 1 (top left), 2 (top right) and 3 (bottom) given in Table~\ref{paramsets}.}
	\label{FigSurfacePlots}
\end{figure}

\noindent
For future reference, accurate approximations to the option values have been computed\footnote{Using $m_1=m_2=1000$, $N=500$ 
and the MCS2 scheme with spline interpolation.} for specifically chosen spot prices $S_0^{(1)}$, $S_0^{(2)}$ of the two assets 
in a neighborhood of the strike $K$, see Table~\ref{tablevalues}. 
\vskip0.3cm

\begin{table} [H]
    \centering
	\caption{\label{tablevalues}European put-on-the-average option value approximations for parameter sets 1, 2 and 3.}
	\vspace{0.3cm}
	Set 1\\\vspace{0.1cm}
	\begin{tabular}{llrrrrr}
		\toprule
		&  & $S_0^{(1)} = 90$ &  & $S_0^{(1)} = 100$ &  & $S_0^{(1)} = 110$ \\
		&  & \multicolumn{1}{l}{} & \multicolumn{1}{l}{} & \multicolumn{1}{l}{} & \multicolumn{1}{l}{} & \multicolumn{1}{l}{} \\
		$S_0^{(2)} = 90$  &  & 8.9385 &  & 6.0316 &  & 3.8757 \\
		$S_0^{(2)} = 100$ &  & 5.9655 &  & 3.8038 &  & 2.3370 \\
		$S_0^{(2)} = 110$ &  & 3.7641 &  & 2.2978 &  & 1.3771 \\ 
		\bottomrule
	\end{tabular}\vspace{0.5cm}

	Set 2\\\vspace{0.1cm}
	\begin{tabular}{llrrrrr}
		\toprule
		&  & $S_0^{(1)} = 90$ &  & $S_0^{(1)} = 100$ &  & $S_0^{(1)} = 110$ \\
		&  & \multicolumn{1}{l}{} & \multicolumn{1}{l}{} & \multicolumn{1}{l}{} & \multicolumn{1}{l}{} & \multicolumn{1}{l}{} \\
		$S_0^{(2)} = 90$  &  & 9.6863 &  & 5.5616 &  & 2.6400 \\
		$S_0^{(2)} = 100$ &  & 5.6162 &  & 2.6929 &  & 1.1264 \\
		$S_0^{(2)} = 110$ &  & 2.7570 &  & 1.1670 &  & 0.5246 \\ 
		\bottomrule
	\end{tabular}\vspace{0.5cm}

	Set 3\\\vspace{0.1cm}
	\begin{tabular}{llrrrrr}
		\toprule
		&  & $S_0^{(1)} = 90$ &  & $S_0^{(1)} = 100$ &  & $S_0^{(1)} = 110$ \\
		&  & \multicolumn{1}{l}{} & \multicolumn{1}{l}{} & \multicolumn{1}{l}{} & \multicolumn{1}{l}{} & \multicolumn{1}{l}{} \\
		$S_0^{(2)} = 90$  &  & 32.7459 &  & 31.0984 &  & 29.5758 \\
		$S_0^{(2)} = 100$ &  & 30.5796 &  & 29.0181 &  & 27.5770 \\
		$S_0^{(2)} = 110$ &  & 28.5830 &  & 27.1033 &  & 25.7396 \\ 
		\bottomrule
	\end{tabular}\vspace{0.5cm}
\end{table}

We next examine, through numerical experiments, the convergence behavior of the seven operator splitting schemes formulated 
in Section~\ref{SecTime} for the temporal discretization of the semidiscrete two-dimensional Kou PIDE. 
Take $m_1=m_2=m$.
Consider a region of financial interest
\begin{equation*}\label{ROI}
	\textrm{ROI} = \left\{(s_1,s_2) : \tfrac{1}{2}K < s_1,s_2 < \tfrac{3}{2}K\right\}
\end{equation*}
and define the {\it temporal discretization error} on this ROI for $t=T$ by
\begin{equation}\label{EROI}
	\widehat{E}^{\rm ROI}(m,N) = \textrm{max} \left\{| V_{i,j}(T) - V_{i,j}^{N'} |: \Delta t = T/N' ~~{\rm and}~~ 
	(s_{1,i},s_{2,j})\in \rm ROI\right\}.
\end{equation}
Clearly, this error is measured in the (important) maximum norm.
The MCS2 scheme is applied with $3000$ time steps to obtain a reference value for the exact semidiscrete solution $V(T)$ to 
\eqref{ODEs}.
For each of the seven operator splitting schemes formulated in Section~\ref{SecTime}, we study the temporal discretization 
error for a sequence of values $N$, where the approximation $V^{N'}$ to $V(T)$ is computed using $N'$ time steps.
Here $N'$ is chosen in function of $N$ depending on the specific scheme, so as to arrive at a fair comparison between the 
ADI schemes and between the IMEX schemes.

For the three ADI schemes we find in our experiments that the evaluation of the two-dimensional integral part, by means of 
the algorithm from Subsection~\ref{SecIntpart}, takes the same computational time as the solution of four to six pertinent 
tridiagonal linear systems (using an a priori $LU$ factorization). 
The scheme (\ref{MCS}) employs two evaluations of the integral part, whereas (\ref{MCS2}) and (\ref{SC2A}) each use only one 
such evaluation.
Next, both schemes (\ref{MCS}) and (\ref{MCS2}) require the solution of four tridiagonal systems, whereas (\ref{SC2A}) 
contains only two tridiagonal systems.
Accordingly, for a fair mutual comparison, we apply the ADI schemes (\ref{MCS}), (\ref{MCS2}) and (\ref{SC2A}) with, respectively, 
$N'=N$, $N' = \lfloor 3N/2 \rfloor$ and $N' = 2N$ time steps.

For the four IMEX schemes, a proper choice of $N'$ depends on the method used for solving the pertinent linear systems involving 
the matrix $I-\tfrac{1}{2}\Delta t A^{(D)}$.
Recall that this matrix is sparse, but has a large bandwidth.
Hence, due to fill-in, the direct solution of these linear systems by $LU$ factorization becomes prohibitively expensive 
when $m$ gets large.
Below we shall consider CPU times in the case of the BiCGSTAB iterative method. 
It turns out that in this case a sound comparison is achieved by applying the two schemes (\ref{CNFE}) and (\ref{CNAB}) with 
$N' = 2N$ time steps, the scheme (\ref{IETR}) with $N' = \lfloor 3N/2 \rfloor$ time steps, and the scheme (\ref{CNFI}) with 
just $N'=N$ time steps.
Notice that in the latter scheme (with $l=2$) two linear systems arise per time step, whereas in each of the other three IMEX 
schemes it is only one.

For each of the seven splitting schemes, the temporal discretization errors $\widehat{E}^{\rm ROI}(m,N)$ have been computed for 
$m = 200$ and a sequence of values $N$ between 10 and 1000. 
Here a direct solver for the linear systems has been used also in the case of the four IMEX schemes, to ensure that the approximation 
errors due to the iterative solver do not affect the outcome.
Figure~\ref{FigTemperrors} displays the obtained temporal errors for all three parameter sets given in Table~\ref{paramsets}, 
where the left column shows the results for the IMEX schemes and the right column the results for the ADI schemes.

From Figure~\ref{FigTemperrors} we observe the positive result that, for each given scheme and parameter set, the temporal errors 
are bounded from above by a moderate value and decrease monotonically as $N$ increases. 
As expected, the CNFE scheme (\ref{CNFE}) shows an order of convergence equal to one, whereas all other six splitting schemes 
reveal a desirable order of convergence equal to two.
By repeating the numerical experiments for spatial grids that are finer (e.g.~$m=300,400,500$) or coarser (e.g.~$m=50, 100$) we 
obtain results that are visually identical to those displayed in Figure~\ref{FigTemperrors}. 
This indicates that, for each splitting scheme, the temporal errors are essentially independent of the spatial grid 
size, and hence, its observed convergence order is valid in a stiff sense, which is very favorable.

It is interesting to remark that the error constants become larger as the intensity of the jumps increases, keeping all
else fixed.
This has been observed in additional numerical experiments for increasing values of $\lambda$ between 0 and 10.

As already alluded to above, we give CPU times in the case where the BiCGSTAB method\footnote{As implemented in Matlab version 
R2020b through the function {\tt bicgstab}.} is employed for the iterative solution of the pertinent linear systems in the four 
IMEX schemes.
Here an ILU preconditioner is used, which has been computed upfront.
As a natural starting vector for the iteration we have taken $V^{n-1}$ for the schemes (\ref{CNFE}), (\ref{IETR}), (\ref{CNAB}) 
and $Y_{k-1}$ for the scheme (\ref{CNFI}).
The tolerance\footnote{For the relative residual error in the 2-norm.} tol = 1e-10 is heuristically chosen small enough so that 
the approximation error due to the iterative solution of the linear systems in each time step does not appear to strongly affect 
the temporal convergence behavior of the IMEX schemes, as considered in Figure~\ref{FigTemperrors}.

Table~\ref{cpuIMEXandADI} shows the obtained individual CPU times (in seconds) for the seven operator splitting schemes, 
applied with $N'$ time steps, in the numerical solution of the two-dimensional Kou PIDE \eqref{PIDE2D} for parameter set~1 
and a range of values $m_1=m_2=m$ between 100 and 1000 with $N=m/2$.
The algorithm of Subsection~\ref{SecIntpart} has been employed to approximate the integral \eqref{integral}.

A perusal of the results of Table~\ref{cpuIMEXandADI} indicates that, for each splitting method, the CPU time is 
essentially directly proportional to $Nm^2$, which is as desired.
Clearly, for each given $m$, the obtained CPU times for the CNFE, CNFI and CNAB schemes (using $N'$ time steps) 
are almost identical and for the IETR scheme it is either equal to this or somewhat larger.
Next, for each given $m$, the CPU times for the MCS, MCS2 and SC2A schemes are almost the same and about half
of those obtained for the IMEX schemes.

Among the four IMEX schemes, the CNAB scheme yields the smallest error constants in our numerical experiments,
cf.~Figure~\ref{FigTemperrors}.
Among the three ADI schemes, the MCS2 scheme has the smallest error constant in all our experiments.
Overall, the MCS2 scheme is preferred.
It yields temporal errors that are smaller than or approximately equal to those of the CNAB scheme, but is 
computationally significantly faster.
Also, the linear systems in each time step of the MCS2 scheme are just tridiagonal, and can therefore be 
solved exactly in a highly efficient manner.

\begin{figure}[H]
	\centering
	\includegraphics[trim = 1.3in 3.7in 1in 3.7in, clip, scale=0.5]{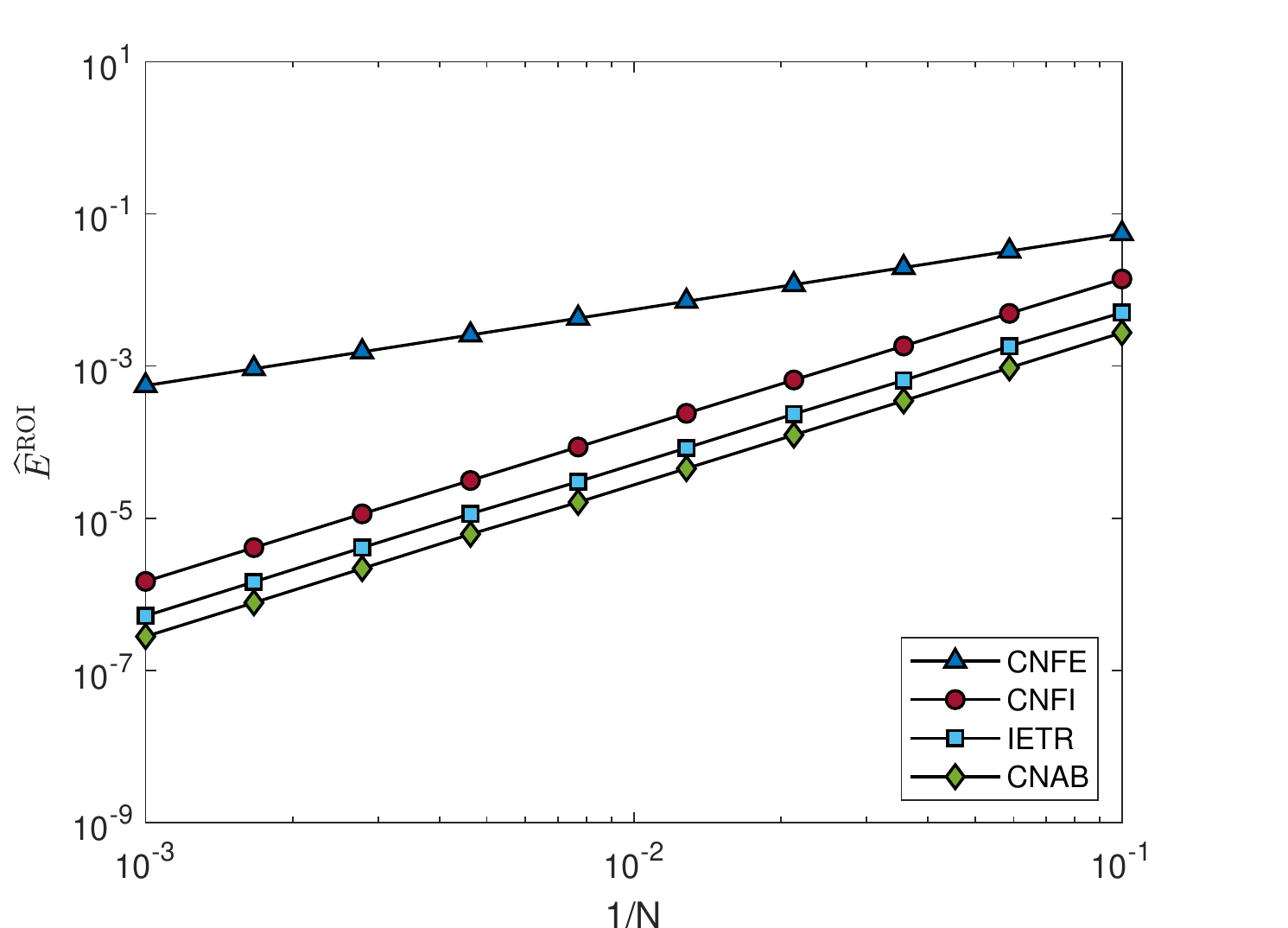}\includegraphics[trim = 1.3in 3.7in 1.6in 3.7in, clip, scale=0.5]{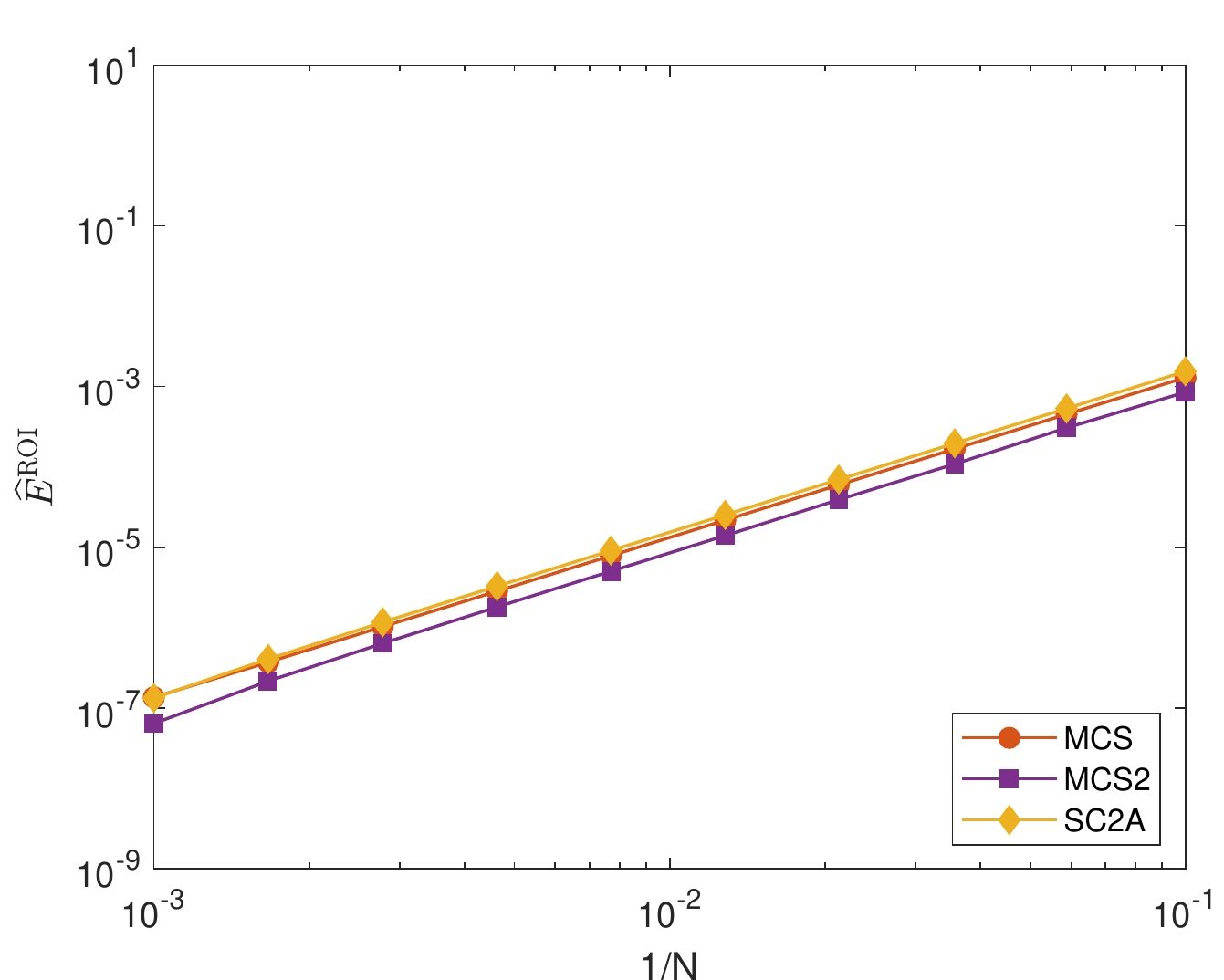}\\
	\includegraphics[trim = 1.3in 3.7in 1in 3.7in, clip, scale=0.5]{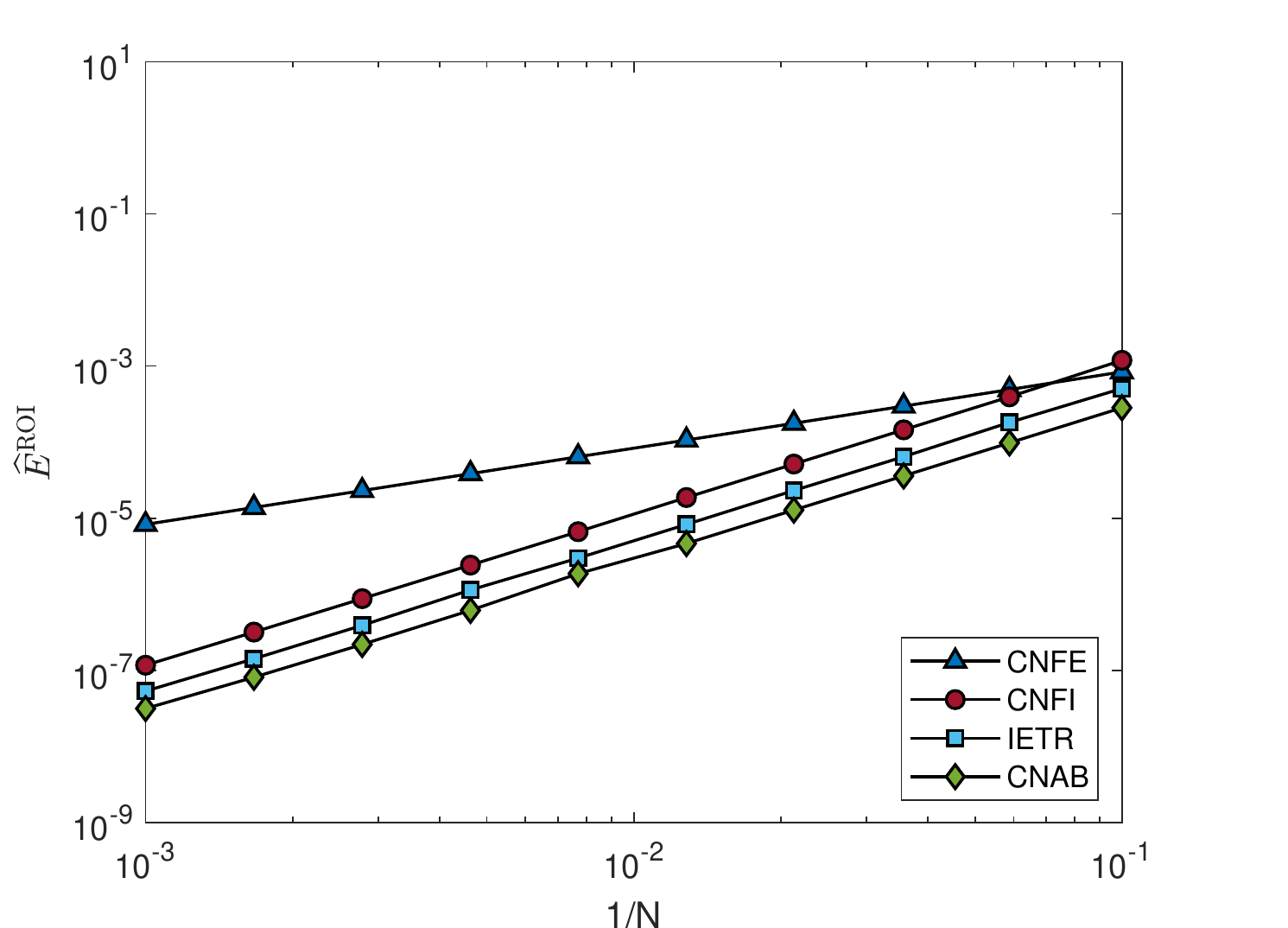}\includegraphics[trim = 1.3in 3.7in 1.6in 3.7in, clip, scale=0.5]{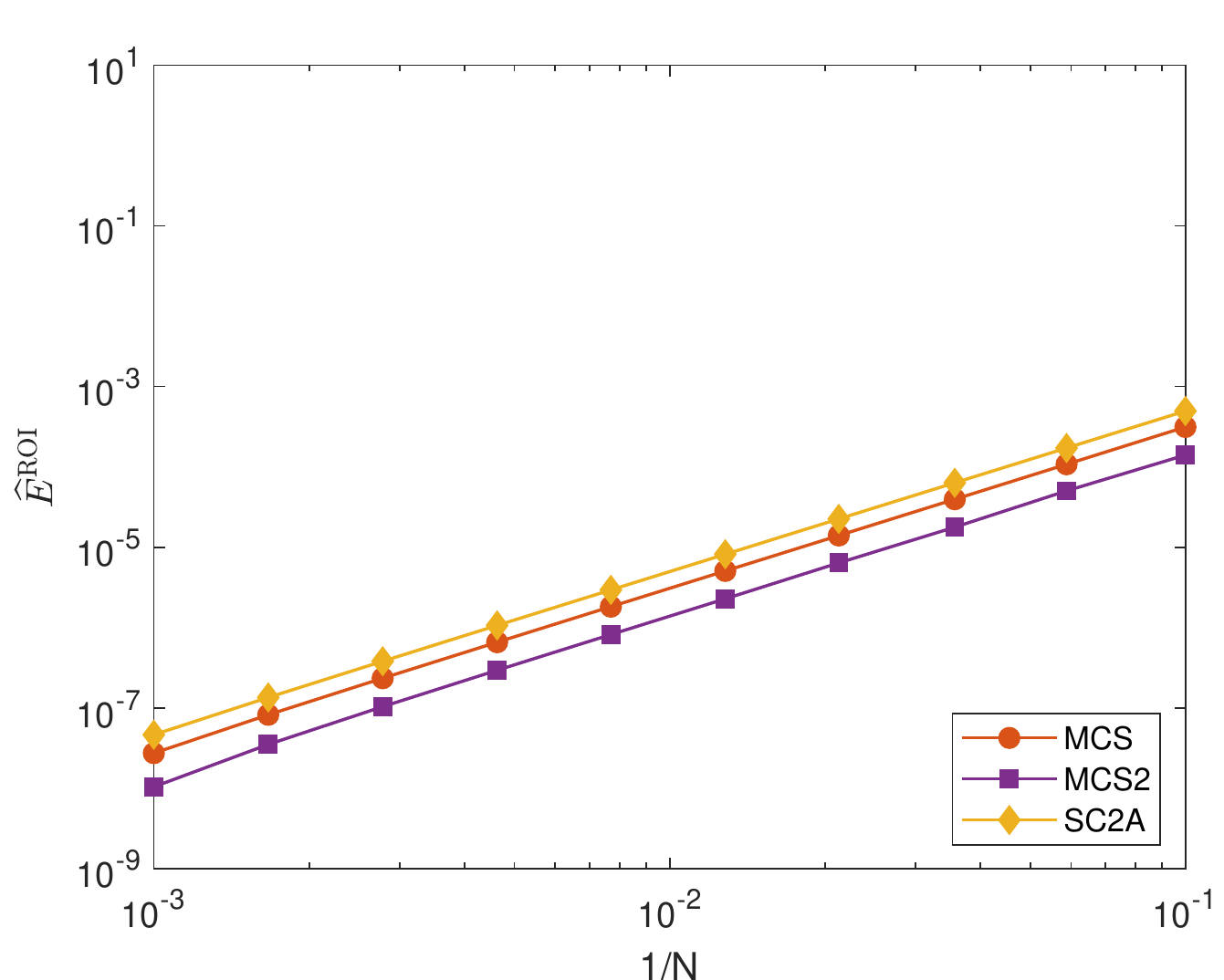}\\
	\includegraphics[trim = 1.3in 3.7in 1in 3.7in, clip, scale=0.5]{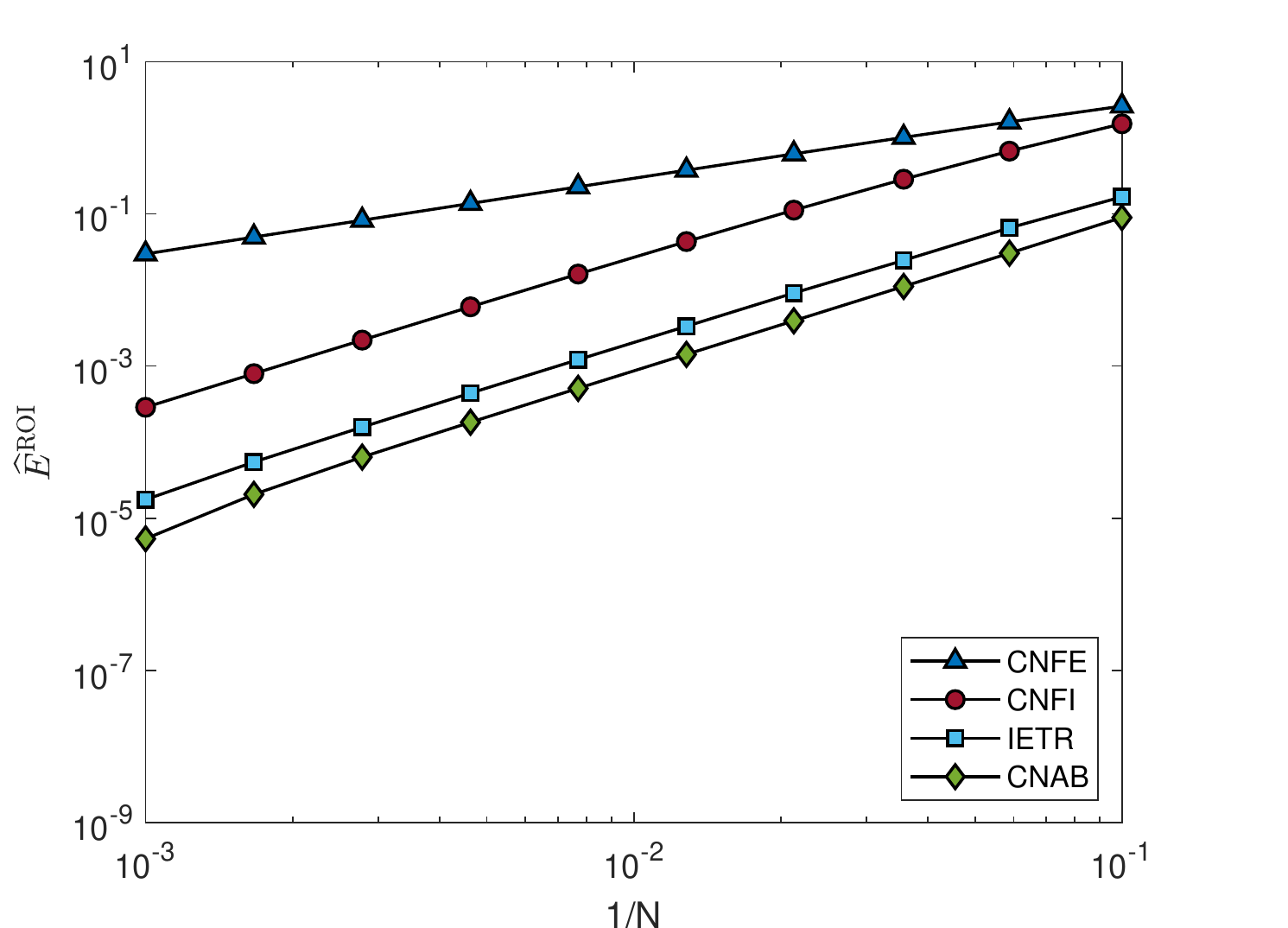}\includegraphics[trim = 1.3in 3.7in 1.6in 3.7in, clip, scale=0.5]{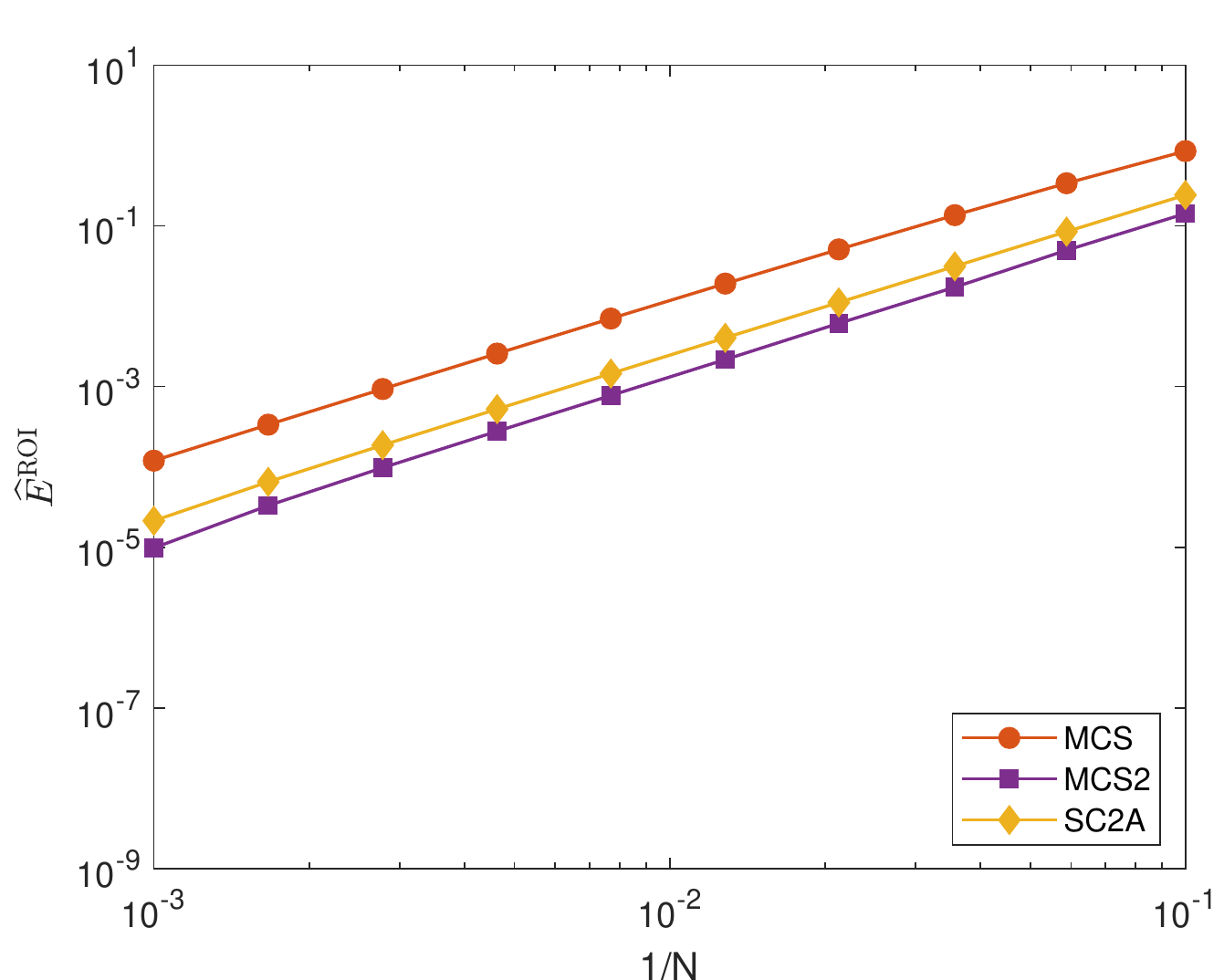}
	\caption{Temporal errors $\widehat{E}^{\rm ROI}(200,N)$ of the seven operator splitting schemes under consideration. IMEX schemes on the left side and ADI 
	schemes on the right side, with parameter set 1 (top), set 2 (middle) and set 3 (bottom) from Table~\ref{paramsets}. The CNFI and MCS schemes are applied 
	with step size $\Delta t = T/N$, the IETR and MCS2 schemes are applied with $\Delta t = T/\lfloor 3N/2 \rfloor$ and the CNFE, 
	CNAB, SC2A schemes are applied with $\Delta t = T/(2N)$.}
	\label{FigTemperrors}
\end{figure}

\begin{table}[H]
    \centering
	\caption{CPU times (s) for the seven operator splitting schemes applied with $N'$ time steps 
	(given between brackets) in the numerical solution of \eqref{PIDE2D} for parameter set 1 with $m_1=m_2=m$ 
	and $N=m/2$.}
	\vspace{0.3cm}
	\begin{tabular}{@{}rr|rrrr|rrr@{}}
		\toprule
		$m$ & $N$ & CNFE   & CNFI & IETR     & CNAB   & MCS   & MCS2     & SC2A   \\
		    &     & ($2N$) &($N$) & ($3N\!/2$) & ($2N$) & ($N$) & ($3N\!/2$) & ($2N$) \\		
		\midrule
		100 &  50 &  0.4 &    0.4 &   0.4 &   0.4 &   0.2 &   0.2 &   0.2 \\
		200 & 100 &  2.3  &   2.0 &   2.1 &   2.3 &   1.1 &   1.1 &   1.1 \\
		300 & 150 &  8.8  &   8.5 &   8.8 &   9.0 &   4.4 &   4.4 &   4.3 \\
		400 & 200 &  23.8 &  24.1 &  25.4 &  24.6 &  13.6 &  13.5 &  14.0 \\ 
		500 & 250 &  49.1 &  49.2 &  56.8 &  49.2 &  27.0 &  27.0 &  28.4 \\
	   1000 & 500 & 503.2 & 499.7 & 555.2 & 501.0 & 247.0 & 240.6 & 253.3 \\ 		
		\bottomrule
	\end{tabular}
	\label{cpuIMEXandADI}
\end{table}

\subsection{The Greeks}\label{Greeks}
The Greeks are mathematical derivatives of the option value with respect to underlying variables and parameters. 
They constitute a measure for risk that indicates how sensitive an option value is to changes in underlying variables and parameters 
and are crucial for hedging strategies. 
In this subsection we consider the numerical approximation of the Delta and Gamma Greeks. 
Delta is a measure for the rate of change of the option value with respect to a change in an underlying asset price.
As there are two underlying assets, there are two Deltas:
\begin{equation*}
\Delta_1 = \frac{\partial v}{\partial s_1} \quad \text{and} \quad \Delta_2 = \frac{\partial v}{\partial s_2} . 
\end{equation*}
Next, Gamma measures the rate of change of a Delta with respect to a change in an underlying asset price. 
There are three different Gammas:
\begin{equation*}
	\Gamma_{11} = \frac{\partial \Delta_1}{\partial s_1} = \frac{\partial^2v}{\partial s_1^2}, \quad 
	\Gamma_{22} = \frac{\partial \Delta_2}{\partial s_2} = \frac{\partial^2v}{\partial s_2^2} \quad \text{and} \quad 
	\Gamma_{12} = \frac{\partial \Delta_1}{\partial s_2} = \frac{\partial^2v}{\partial s_2 \partial s_1} = \frac{\partial \Delta_2}{\partial s_1} = \Gamma_{21}.
\end{equation*}

By virtue of the finite difference discretization that has been defined in Section \ref{SecSpatial}, the Delta and Gamma Greeks
can directly be approximated, at essentially no computational cost, by applying the second-order central finite 
difference formulas considered in Subsection \ref{SecPDEpart} to the option value approximations on the spatial grid.

As an illustration, Figure~\ref{FigGreeks} displays the numerically\footnote{Using $m_1=m_2=200$, $N=100$ and the MCS2 scheme.} 
obtained Delta and Gamma surfaces at maturity for the European put-on-the-average option under the two-asset Kou model for 
parameter set~1. 
As expected, the Delta surfaces are steepest around the line segment $s_1 + s_2 = 2K$ and, correspondingly, the 
Gamma surfaces are highest there.

Similarly to the option value, we study the temporal convergence behavior of all operator splitting schemes in 
the case of the five Greeks.
Akin to (\ref{EROI}), the temporal discretization error in the case of Delta $\Delta_k \, (k = 1, 2)$ is defined by
\begin{equation}\label{EROIGreeks}
	\widehat{E}^{\rm ROI}_{\Delta_k}(m,N) = \textrm{max} \left\{| (\Delta_k)_{i,j}(T) - (\Delta_k)_{i,j}^{N'} |: 
	\Delta t = T/N' ~~{\rm and}~~ (s_{1,i},s_{2,j})\in \rm ROI \right\}.
\end{equation}
Here $\Delta_k (T)$ denotes the pertinent finite difference matrix for convection applied to the reference value for the exact
semidiscrete solution $V(T)$ given in Subsection~\ref{SecResults1}. 
Next, $\Delta_k ^{N'}$ is equal to the same finite difference matrix applied to the approximation $V^{N'}$ of $V(T)$ that is 
generated by any one of the seven operator splitting schemes. 
Analogous temporal discretization error definitions hold in the case of $\Gamma_{11}, \Gamma_{22}$, $\Gamma_{12}$. 

Figure~\ref{FigGreeksError} displays the temporal errors in the case of the five Greeks and parameter set 1 for $m = 200$ and 
the same sequence of values $N$ between 10 and 1000 as before.
We arrive at the same conclusions on the temporal convergence behavior of the seven splitting schemes as obtained 
in Subsection~\ref{SecResults1} regarding the option value.
In particular, besides CNFE, all splitting schemes reveal a stiff order of convergence equal to two, and MCS2 has the best
performance among all these schemes.

We note that the somewhat larger errors that are observed for the CNFI, IETR, CNAB schemes when $N$ is small are attributed
to the nonsmoothness of the initial (payoff) function and could be alleviated by applying four (instead of two) half time steps
with IMEX Euler at the start of the time stepping, cf. Section~\ref{SecTime}.

\begin{figure}[H]
	\centering
	\includegraphics[trim = 1.3in 3.7in 1in 3.7in, clip, scale=0.5]{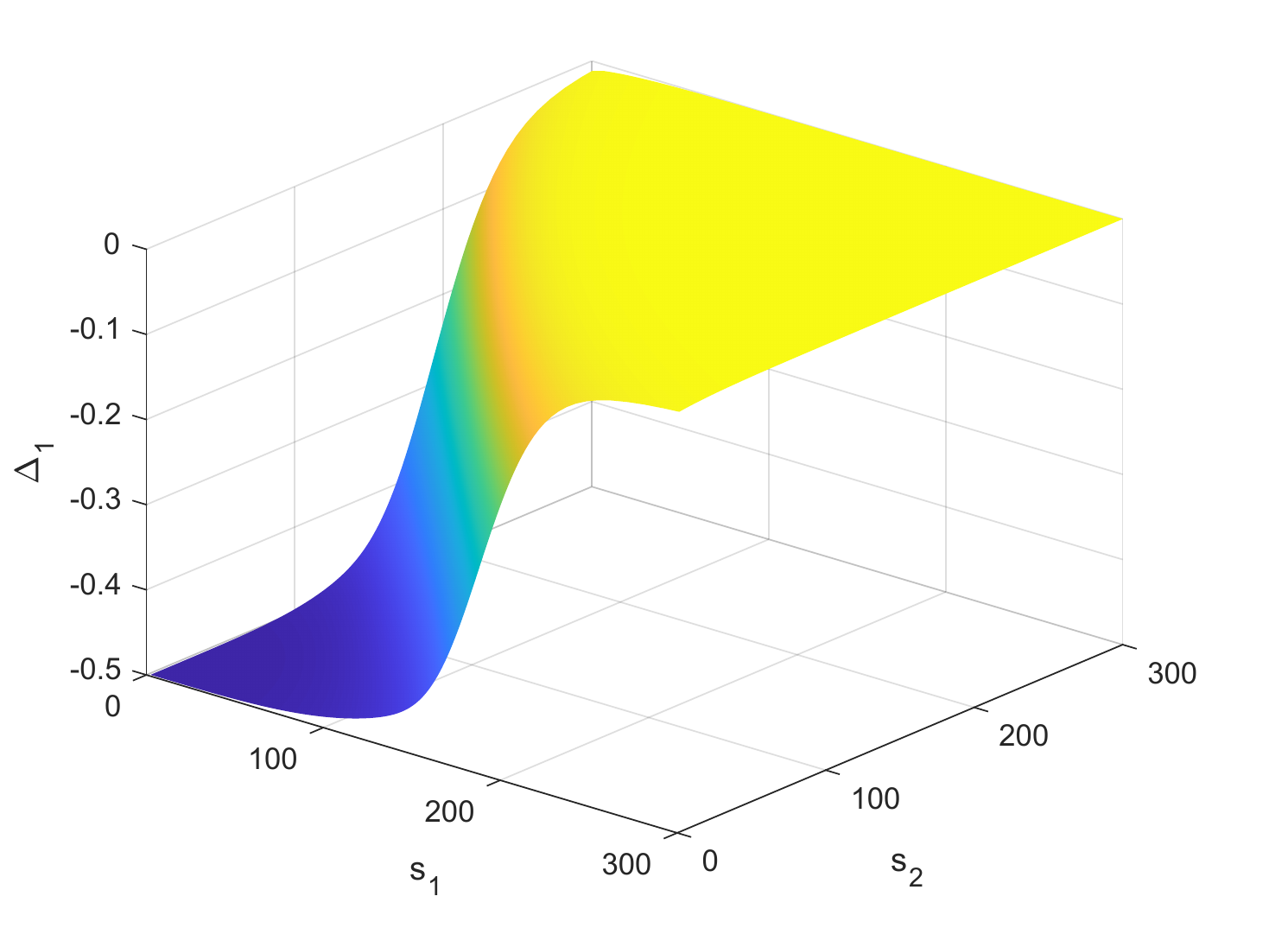}\includegraphics[trim = 1.3in 3.7in 1.4in 3.7in, clip, scale=0.5]{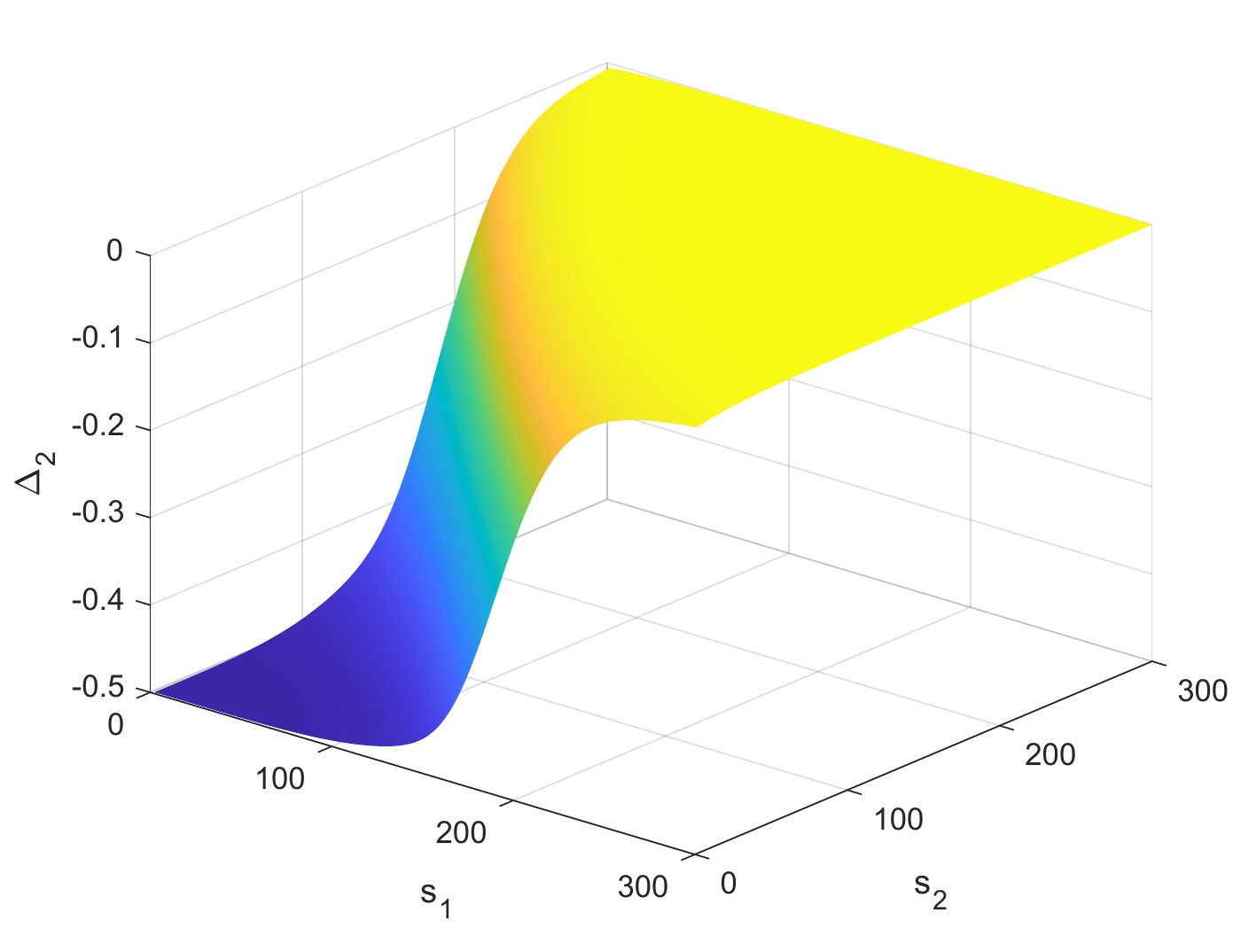}\\
	\includegraphics[trim = 1.3in 3.7in 1in 3.7in, clip, scale=0.5]{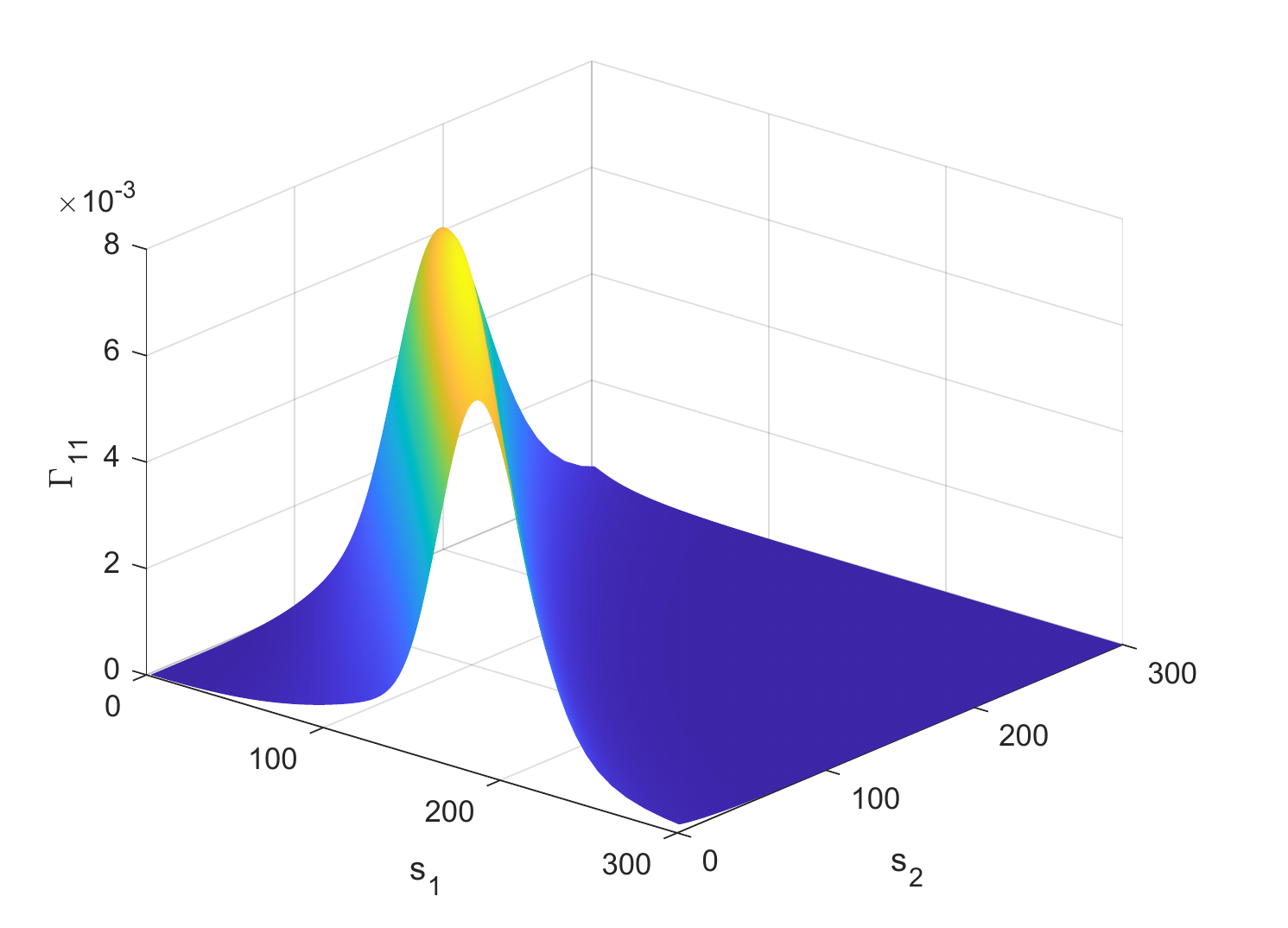}\includegraphics[trim = 1.3in 3.7in 1.4in 3.7in, clip, scale=0.5]{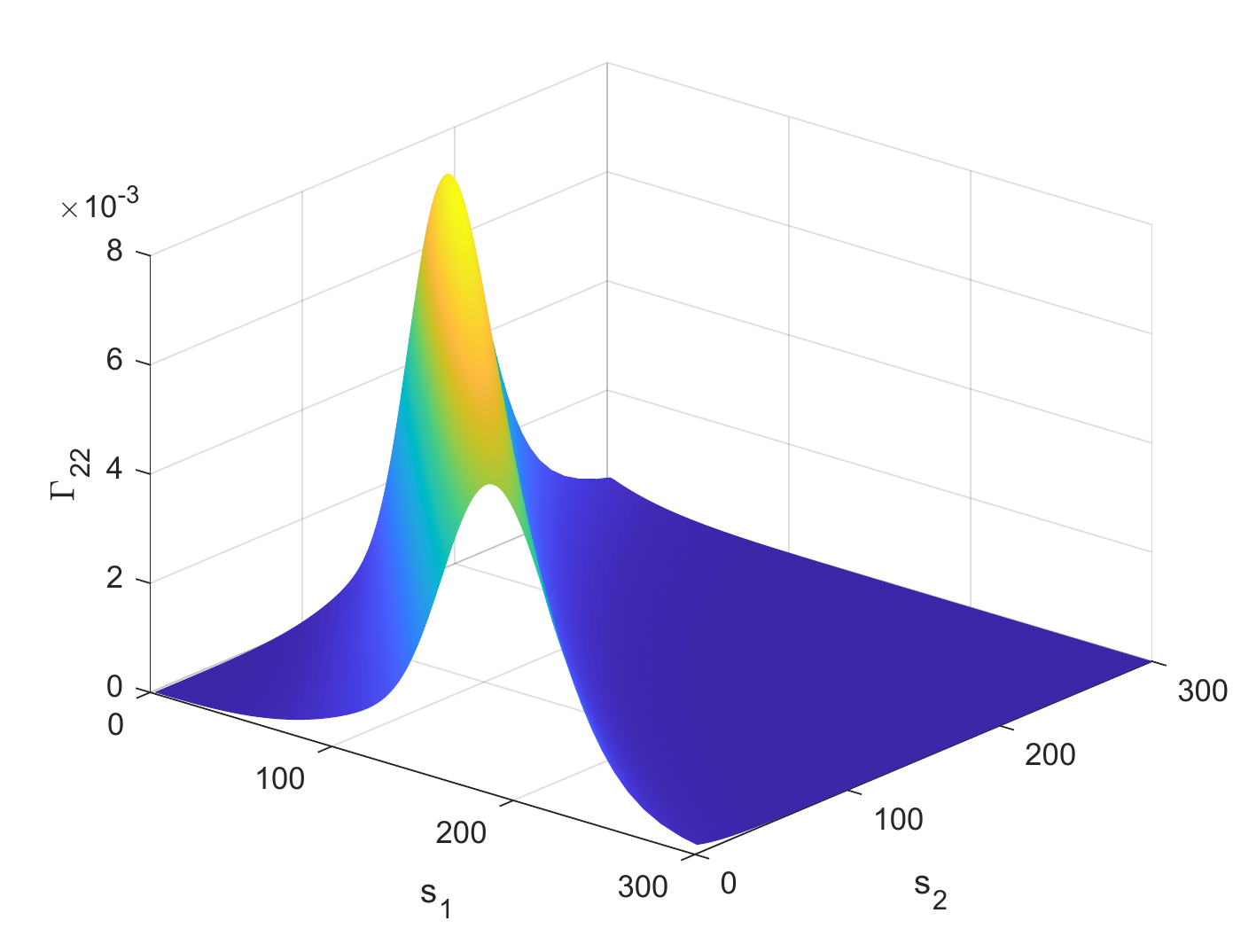}\\
	\includegraphics[trim = 1.3in 3.7in 1in 3.7in, clip, scale=0.5]{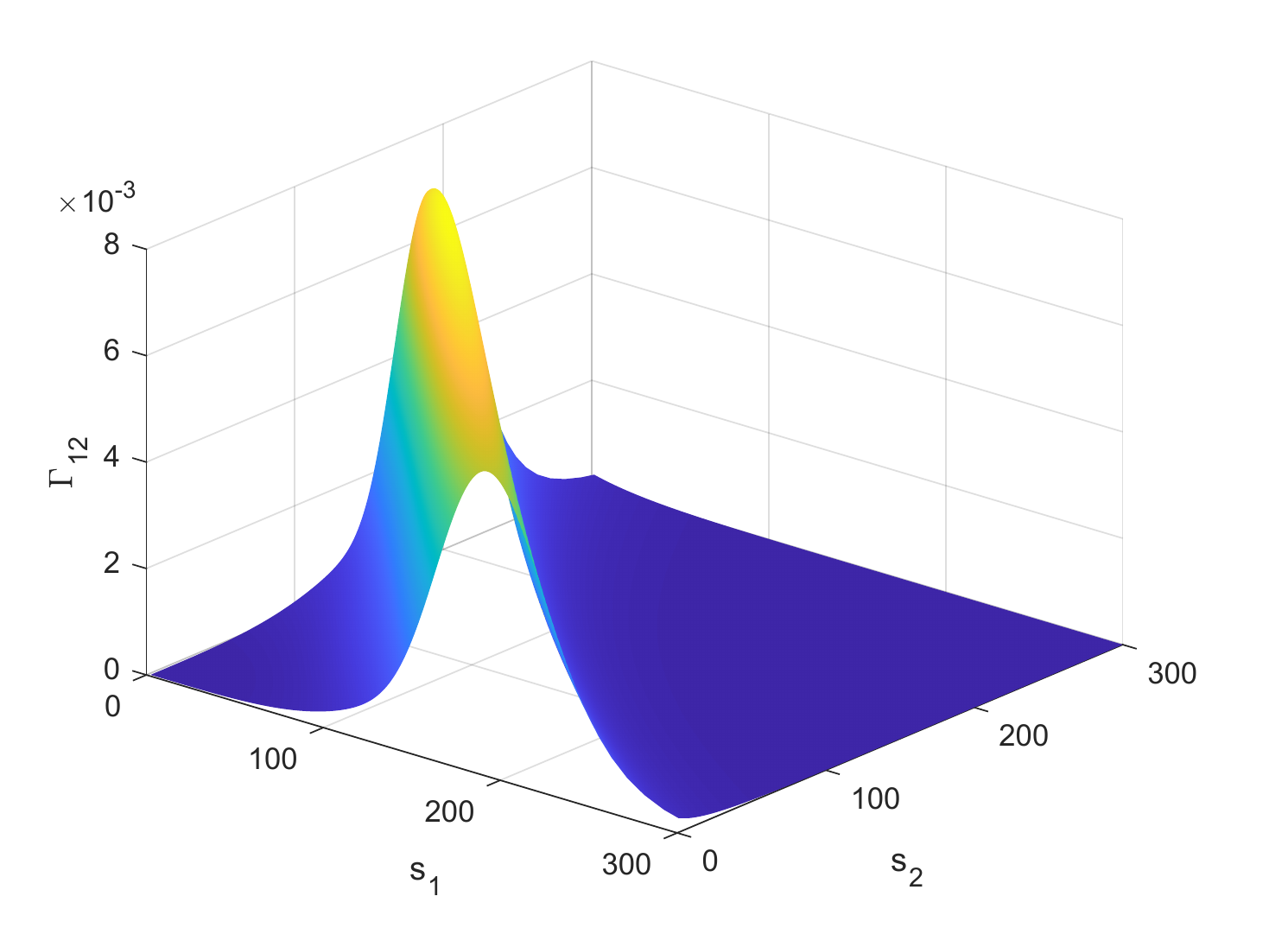}
	\caption{
	First-order Greeks $\Delta_1$ (top left) and $\Delta_2$ (top right) and second-order Greeks $\Gamma_{11}$ (middle left), $\Gamma_{22}$ 
	(middle right) and $\Gamma_{12}$ (bottom) for the European put-on-the-average option under the two-asset Kou model in the case of 
	parameter set 1.}
	\label{FigGreeks}
\end{figure}

\begin{figure}[H]
	\centering
	\includegraphics[trim = 1.3in 3.7in 1in 3.7in, clip, scale=0.5]{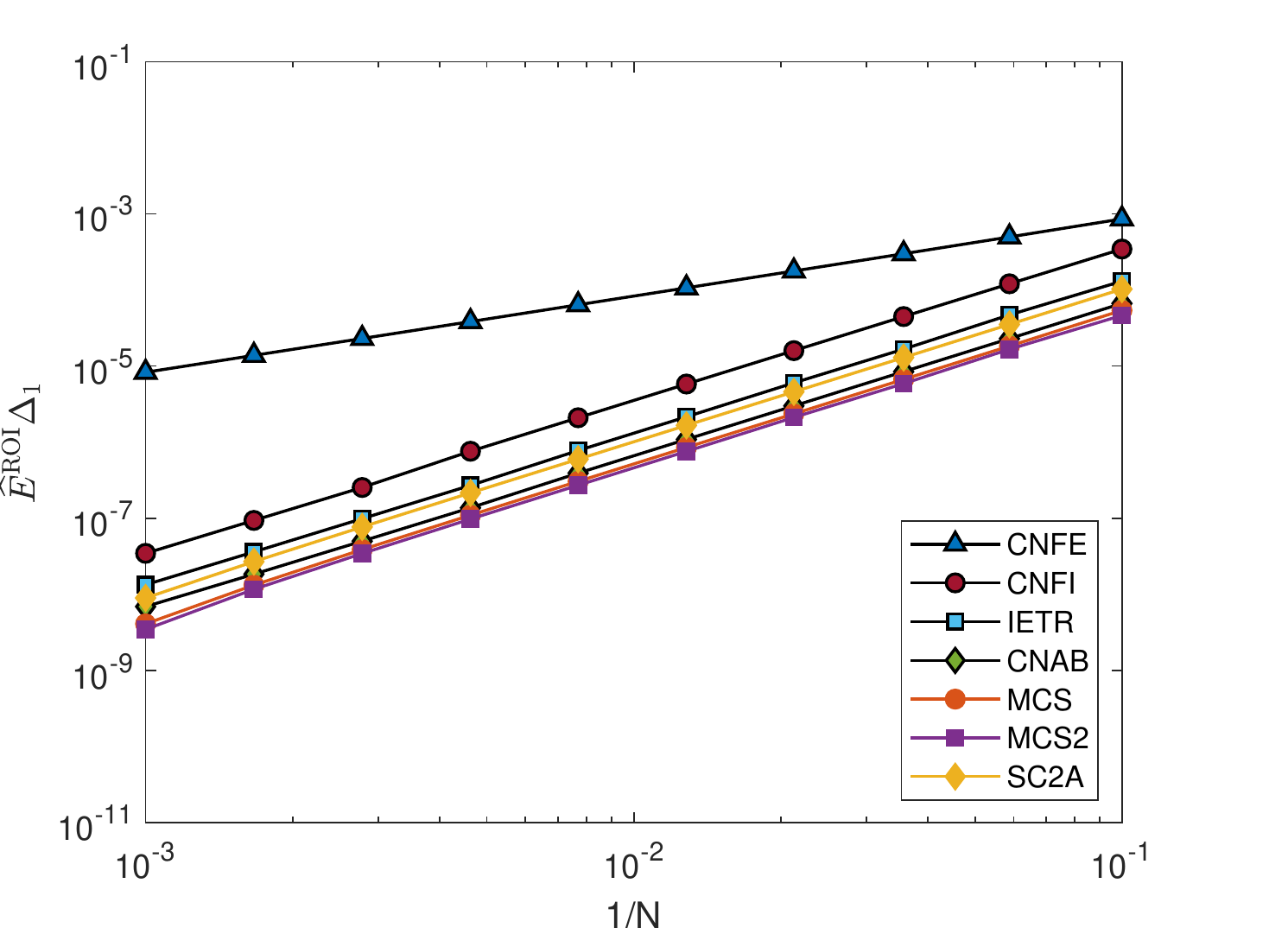}\includegraphics[trim = 1.3in 3.7in 1.6in 3.7in, clip, scale=0.5]{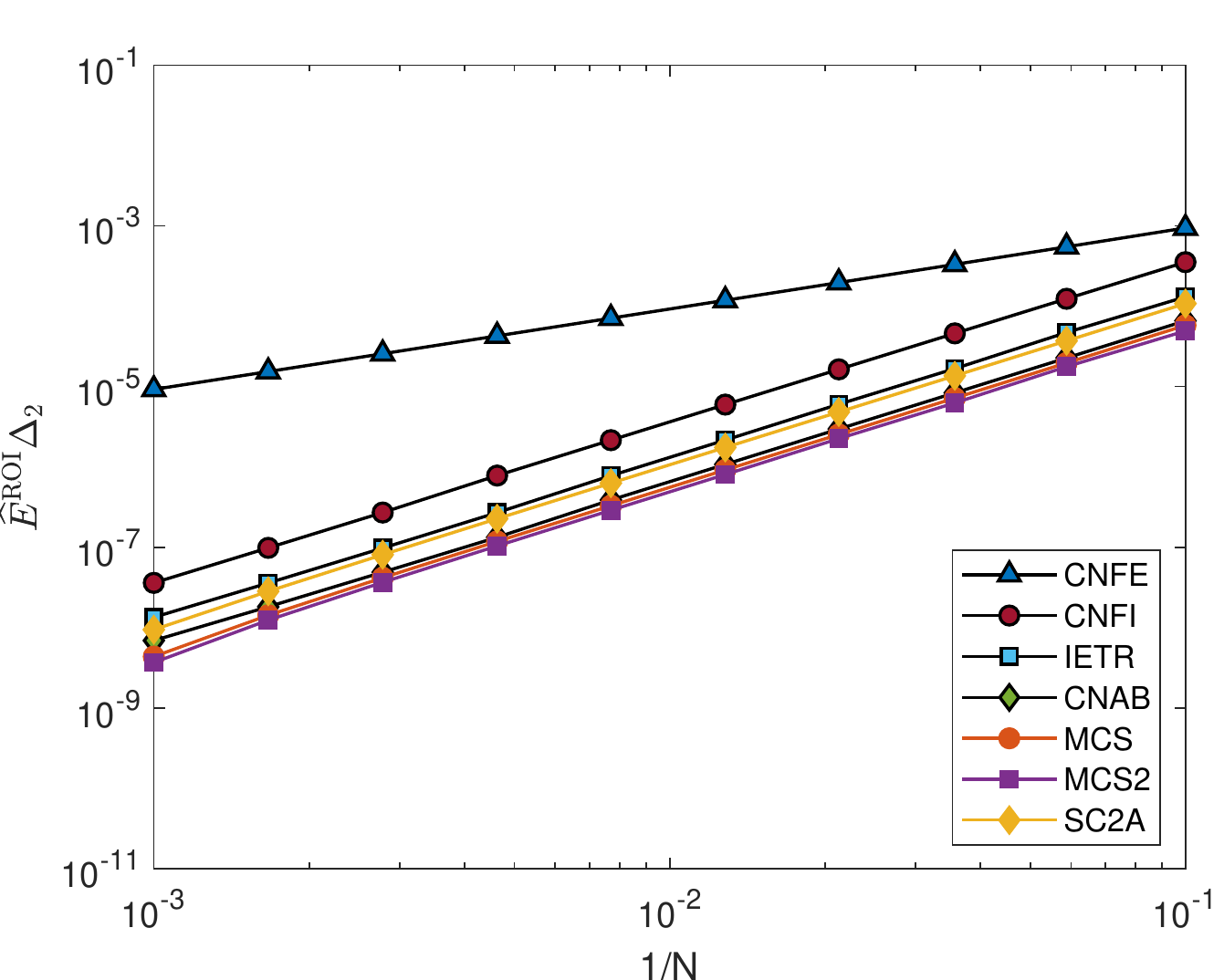}\\
	\includegraphics[trim = 1.3in 3.7in 1in 3.7in, clip, scale=0.5]{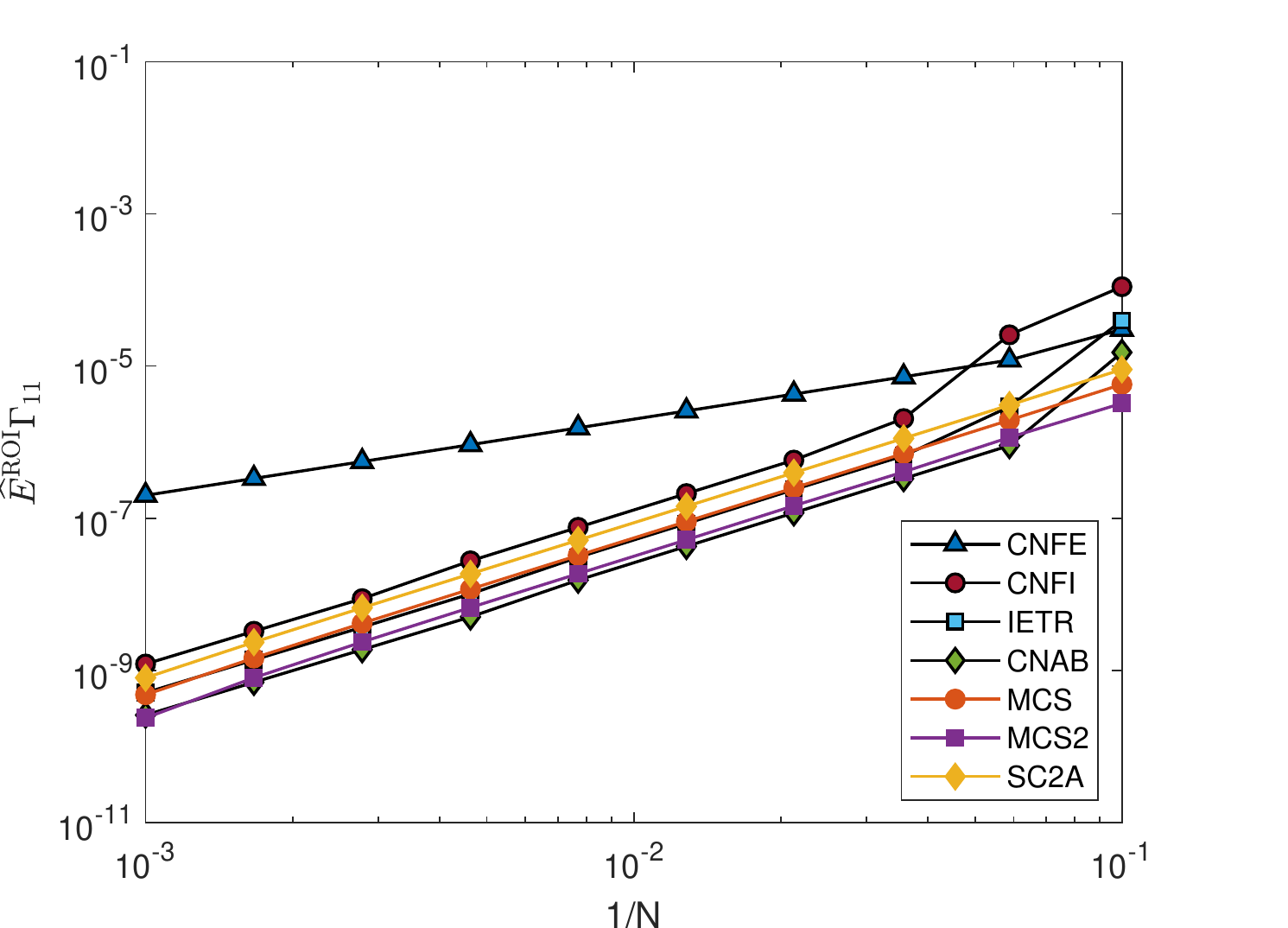}\includegraphics[trim = 1.3in 3.7in 1.6in 3.7in, clip, scale=0.5]{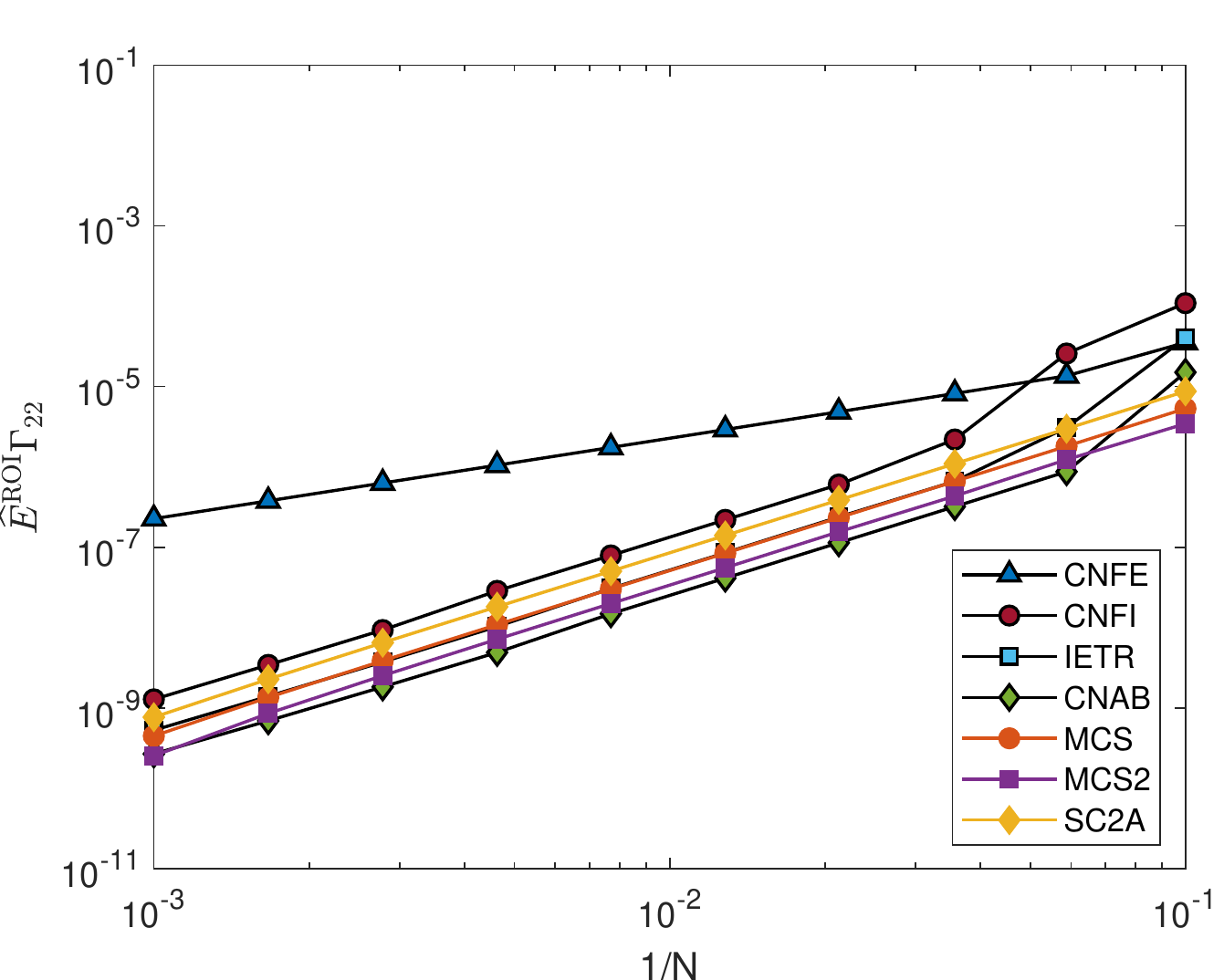}\\
	\includegraphics[trim = 1.3in 3.7in 1in 3.7in, clip, scale=0.5]{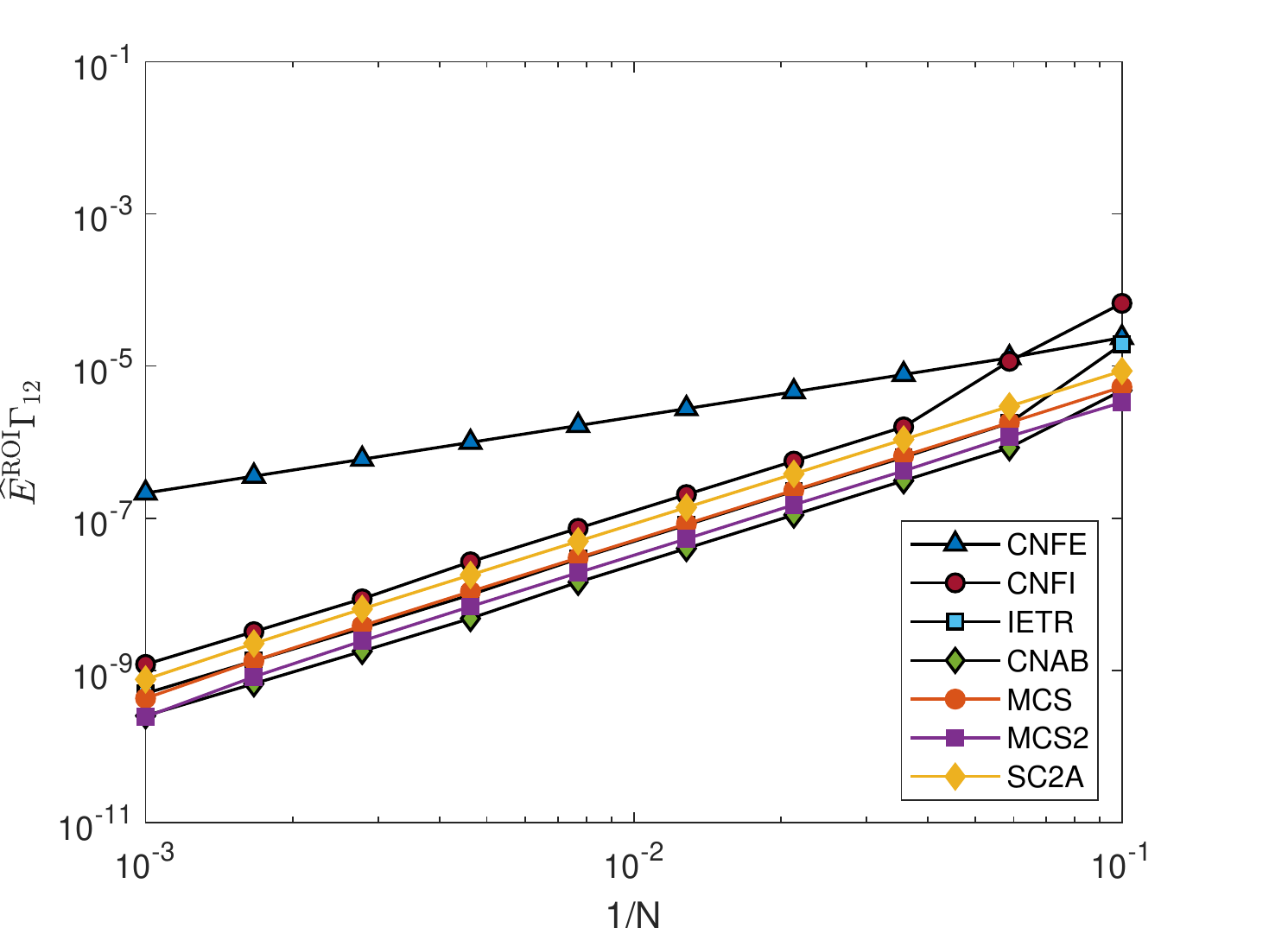}
	\caption{Temporal errors of the seven operator splitting schemes under consideration in the case of the five Greeks and parameter set 1: 
	$\widehat{E}^{\rm ROI}_{\Delta_1}$ (top left), $\widehat{E}^{\rm ROI}_{\Delta_2}$ (top right), $\widehat{E}^{\rm ROI}_{\Gamma_{11}}$ 
	(middle left), $\widehat{E}^{\rm ROI}_{\Gamma_{22}}$ (middle right) and $\widehat{E}^{\rm ROI}_{\Gamma_{12}}$ (bottom). 
	The CNFI and MCS schemes are applied with step size $\Delta t = T/N$, the IETR and MCS2 schemes are applied with 
	$\Delta t = T/\lfloor 3N/2 \rfloor$ and the CNFE, CNAB, SC2A schemes are applied with $\Delta t = T/(2N)$.}
	\label{FigGreeksError}
\end{figure}

%%%%%%%%%%%%%%%
% Conclusion  %
%%%%%%%%%%%%%%%

\section{Conclusions}\label{SecConc}
We have studied the valuation of European options under the two-asset Kou jump-diffusion model via the numerical solution of the 
pertinent two-dimensional time-dependent PIDE.
A first main contribution of our paper is the extension of an algorithm derived by Toivanen~\cite{T08}, which enables a highly 
efficient numerical evaluation of the nonlocal double integral appearing in this PIDE.
The computational cost of the acquired algorithm is optimal: it is directly proportional to the number of grid points in the 
spatial discretization.
Also, it is simple to implement and requires little memory usage.
Subsequently, for the efficient discretization in time of the semidiscretized two-dimensional Kou PIDE, we have investigated 
seven modern operator splitting schemes of the IMEX and the ADI kind.
Every splitting scheme conveniently treats the integral term in an explicit fashion. 
Through rigorous analysis and extensive numerical experiments, we have examined the stability and convergence behavior, 
respectively, of the splitting schemes as well as their relative performance. 
All of the considered schemes, except for the first-order CNFE scheme, show a desirable, stiff order of temporal convergence 
equal to two. 
The MCS2 scheme, successively developed by in 't Hout \& Welfert \cite{HW09} for PDEs and in 't Hout \& Toivanen \cite{HT18} 
for PIDEs, stood out favorably among the splitting schemes in view of its superior efficiency. 
This conclusion agrees with the results recently obtained by Boen \& in 't Hout~\cite{BH21} in the case of the two-dimensional 
Merton PIDE. 

All schemes and results in our present paper can straightforwardly be extended to the case of a two-asset jump-diffusion model 
that has a mixture of independent and perfectly correlated jumps with log-double-exponential distributions, 
leading to a sum of three integrals in the two-dimensional PIDE, which is handled as a single (integral) term in the IMEX and 
ADI schemes, cf.~also Kaushansky, Lipton \& Reisinger~\cite{KLR18} in a different context.

For the valuation of American-style options under a given two-asset jump-diffusion model, a two-dimensional partial 
integro-differential complementarity problem (PIDCP) is obtained.
The adaptation of the operator splitting schemes of our present paper to such problems has recently been studied in 
Boen \& in 't Hout~\cite{BH20} in the case of the two-asset Merton model.
Here, for their effective adaptation, the combination with an iterated version of the Ikonen--Toivanen (IT) splitting 
technique~\cite{IT04,IT09,T08} has been considered as well as the penalty approach~\cite{CF08,FV02,ZFV98}.
Notably, the MCS2-IT(2) method~\cite{BH20}, which denotes the combination of the MCS2 scheme with two iterations of IT 
splitting, is found to be an efficient and stable temporal discretization method for the two-dimensional Merton PIDCP.
We expect that the same conclusion will hold in the case of the two-dimensional Kou PIDCP.

For PIDEs or PIDCPs that stem from infinite activity processes, such as the VG, NIG and CGMY models, the development and
analysis of operator splitting methods is still largely open in the literature and this forms an aim for future research. 
Here a useful idea is to replace the small jumps, that is, the jumps with sizes not exceeding a given small positive 
threshold $\varepsilon$, by a scaled Brownian motion, see~\cite{CT04book,CV05}.

\section*{Declaration of interest}
The authors report no conflicts of interest. The authors alone are responsible for the content and writing of the paper.

\section*{Acknowledgements}
The authors wish to thank the two anonymous reviewers for their various useful comments and suggestions, which have led
to a substantial improvement of the original version of this paper.

	%%%%%%%%%%%%%%%%%%
	%  Bibliography  %
	%%%%%%%%%%%%%%%%%%

\bibliographystyle{plain}
\bibliography{bib2DKou_splitting}

\end{document}